\documentclass{article}
\usepackage{lmodern}			
\usepackage[usenames,dvipsnames]{pstricks}
\usepackage{stackrel}
\usepackage{float}
\usepackage{graphicx}
\usepackage{epstopdf}
\usepackage{enumerate}
\usepackage{amsthm}
\usepackage{multicol}
\usepackage{amssymb,indentfirst} 
\usepackage{amssymb}
\usepackage[centertags]{amsmath} 
\usepackage[utf8]{inputenc}		
\newenvironment{prova}[1][\textbf{Proof}]{\noindent\textit{#1. }}{\\ \qed}
\newtheorem{teo}{Theorem}[section]
\newtheorem{cor}[teo]{Corollary}

\newtheorem{defi}[teo]{Definition}

\newtheorem{obs}[teo]{Remark}

\usepackage{array}
\newcommand{\R}{\mathbb{R}}

\newcommand{\h}{\mathbb{H}}

\newcommand{\sech}{\operatorname{sech}}

\newcommand{\pati}{\textcolor{red}}
\newcommand{\mat}{\textcolor{blue}}
\title{Invariant solutions for the asymptotic Plateau problem in $\h^3$}
\author{Matheus Pimentel Gomes
\\
Patrícia Kruse Klaser \\
Jaime Bruck Ripoll \\
Leonardo Prange Bonorino \\}
\date{}

\begin{document}

\maketitle

\begin{abstract}
In this paper, we present solutions to the asymptotic Plateau problem in the hyperbolic space $\mathbb{H}^3$. In this context, we exhibit solutions for curves that are invariant under the action of a one-parameter subgroup of isometries of $\mathbb{H}^3$. To achieve this, we prove the existence of foliations of $\mathbb{H}^3$ by minimal surfaces that are properly embedded, complete, and invariant under these subgroups, which are then used to solve the problem.
\end{abstract}

\section{Introduction}

In this paper we study the asymptotic Plateau problem (APP) in the hyperbolic space $\h^3$, the complete simply connected Riemannian manifold with constant sectional curvature  -1. The main question of this problem is about the existence of a minimal surface with asymptotic boundary $\Gamma$, where $\Gamma$ is a given curve in the asymptotic boundary of $\h^3$ ($\partial_{\infty}\h^3$). In fact, this problem has been widely studied in $\h^3$ and it has been solved for a large class of curves. We are also interested in describing the solutions to this problem in some particular cases.

In \cite{Anderson}, Anderson proved for each immersed closed curve $\Gamma$ in the asymptotic boundary of $\h^3$ there exists a complete, absolutely area-minimazing locally integral 1-current $\Sigma$ in $\h^3$ such that the asymptotic boundary of $\Sigma$ is $\Gamma$. Guan and Spruck in \cite{Spruck} considered the asymptotic Dirichlet problem (ADP) for hyperbolic Killing graphs and exhibited a family of curves $\Gamma \subset \partial_{\infty}\h^3$ (precisely the curves that correspond to a graph of a continuous function defined on $\partial_{\infty}\h^2$) that can be minimally filled, that is, there is $\Sigma$ a complete, embedded simply connected minimal surface with asymptotc boundary $\Gamma.$ They also proved this solution is unique and it is a radial graph. Furthermore, the forth author and Telichevesky in \cite{Miriam} generalized Guan and Spruck's result to parabolic Killing graphs.

In this paper we are interested in the asymptotic Plateau problem for invariant curves. A Killing field in $\h^3$ is of one of the following types: elliptic, parabolic, hyperbolic or helicoidal. The elliptic Killing fields have the flux associated to the rotation around a geodesic $\gamma$ and fix every point in $\gamma.$ A parabolic Killing field has the flux associated to the rotantion around a point $p$ at infinity and every orbit is a horocicle containg $p.$ A hyperbolic Killing field has the flux associated to the translation along a geodesic $\gamma$ and every orbit of its flux is a curve equidistant to $\gamma.$ Finally, the helicoidal Killing fields are obtained as the sum of a elliptic and a hyperbolic field with the same invariant geodesic $\gamma.$ In any case the action of the flux associated to the field extends naturally to the asymptotic boundary of $\h^3.$ A curve $C \subset \partial_{\infty}\h^3$ is called invariant by the field $X$ if the flux of $X$ in $C$ does not change it.

Given $C \subset \partial_{\infty}\h^3$ an invariant curve, we are interested in the question of whether $C$ bounds an invariant symply connected minimal surface. The simplest situation occurs when one considers an elliptic Killing field $X:$ Let $\gamma$ be the geodesic fixed by $X$. Through every point $\gamma(t)$ of $\gamma,$ there is a totally geodesic surface $\h^2(t)$ orthogonal to $\gamma.$ The asymptotic boundaries of these $\h^2(t)$ form a family of circles in $\partial_{\infty}\h^3$ which are invariant by the flux of $X$ and are minimaly filiable. These surfaces are well known in the literature and also correspond to the solution of the ADP considered in \cite{Spruck} and \cite{Miriam} for constant boundary data.

An invariant curve $C$ at $\partial_{\infty}\h^3,$ associated to the other three types of Killing fields, may not be a Killing graph associated to a parabolic or hyperbolic Killing field. For these cases, the knowledge about the APP is restricted to Andersons result. Nevertheless invariant minimal surfaces were studied in \cite{Manfredo} and they are the surfaces we use to prove the following result

\begin{teo}\label{exis}
    Let $G$ be 1-parameter subgroup of $Iso(\h^3)$ and let $\Gamma$ be a $G-$ invariant connected closed curve at $\partial_\infty\h^3,$ that bounds a connected region of $\partial_\infty\h^3.$ Then there exists a complete properly embedded simply connected $G-$invariant minimal surface $\Sigma$ with $\partial_\infty\Sigma=\Gamma$.
\end{teo}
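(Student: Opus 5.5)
Since $\Gamma$ is $G$-invariant, the minimal surface we seek can be taken as the $G$-orbit of a generating curve, so the problem reduces to an ODE/ODE-like problem in the orbit space $\h^3/G$. The plan is to use the invariant minimal surfaces of \cite{Manfredo} to build, for each type of subgroup, a foliation of $\h^3$ (or of a suitable $G$-invariant region) by complete, properly embedded, $G$-invariant minimal surfaces. We then pick the leaf whose asymptotic boundary is $\Gamma$. We split into the four cases (elliptic, parabolic, hyperbolic, helicoidal).

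The elliptic case is handled directly. A connected closed invariant curve bounding a connected region must be a circle orthogonal to the axis direction, i.e.\ $\partial_\infty\h^2(t)$. The totally geodesic $\h^2(t)$ solves the problem.

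In the remaining cases, work in the upper half-space model, normalizing so that $G$ is generated as follows:
\begin{itemize}
\item parabolic: Euclidean translations $(x,y,z)\mapsto(x+s,y,z)$;
\item hyperbolic: dilations $p\mapsto e^{s}p$;
\item helicoidal: dilations composed with rotations about the $z$-axis, $p\mapsto e^{s}R_{\theta s}p$.
\end{itemize}
An invariant connected closed curve $\Gamma\subset\partial_\infty\h^3=\R^2\cup\{\infty\}$ is then determined by one point of $\R^2$ and passes through the fixed point(s) at infinity. In the parabolic case $\Gamma$ is a line $y=c$ together with $\infty$. In the hyperbolic case it is the union of two rays from the origin (with $0$ and $\infty$). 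In the helicoidal case it is a pair of logarithmic spirals related by a rotation. The hypothesis that $\Gamma$ bounds a connected region excludes degenerate configurations such as a single ray, which does not close up. The generating curve lives in the two-dimensional orbit space, and the minimality of the orbit surface becomes a second-order ODE for the profile curve, as derived in \cite{Manfredo}.

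The core step is the following. For each case, exhibit a one-parameter family of $G$-invariant minimal surfaces $\Sigma_\lambda$, with $\lambda$ the "opening" parameter, and prove four things:
\begin{enumerate}
\item each profile curve is complete and proper, with both ends going to $\partial_\infty$;
\item the surfaces are embedded;
\item they form a foliation, i.e.\ are pairwise disjoint and fill the space;
\item $\lambda\mapsto\partial_\infty\Sigma_\lambda$ covers all admissible invariant curves continuously and monotonically.
\end{enumerate}
In the parabolic case the vertical half-planes $y=c$ already give the foliation and the answer. In the hyperbolic case, the generating curve lies on the hemisphere of directions (a cone over a curve in $S^2_+$). Here one analyzes the ODE of curves $\theta\mapsto(\phi(\theta))$ whose cone is minimal, shows that each solution starting at the top with horizontal tangent reaches the equator at two points, and checks that the angle between those points varies continuously from $0$ to $\pi$ (the case $\pi$ is the vertical plane). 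By symmetry and rotation this covers every pair of rays. The helicoidal case is similar, with the twisted ODE. Simple connectivity follows since each surface is the product of an arc with the orbit line $\R$.

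I expect the main obstacle to be the ODE analysis in the hyperbolic and especially helicoidal cases. Proving that the profile solutions do not spiral, that they hit the boundary, and that the asymptotic angle is monotone and continuous in the parameter with the full range of limits requires a careful first-integral or phase-plane argument. There is also a subtlety with surjectivity near the degenerate limits (angle $\to 0$, where the surfaces escape to infinity). I would first try to exploit the first integral coming from the Killing field (flux conservation) to reduce to a first-order equation. I would then use comparison with totally geodesic planes, via the maximum principle, to obtain disjointness and monotonicity. Finally I would obtain surjectivity from continuity and the two limiting regimes by an intermediate value argument.
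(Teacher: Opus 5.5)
\textbf{Gap: the parabolic case is misclassified.} Your classification of the admissible curves $\Gamma$ is wrong in the parabolic case, and the error removes the only non-trivial part of that case. With $G$ the translations $(x,y,z)\mapsto(x+s,y,z)$, the orbits in $\partial_\infty\h^3$ are the lines $\{y=c\}$, all closing up at the fixed point $\infty$. A single orbit $\{y=c\}\cup\{\infty\}$ is one admissible curve. But the union of two distinct orbits, $\{y=c_1\}\cup\{y=c_2\}\cup\{\infty\}$, is also a $G$-invariant connected closed curve: two circles tangent at the fixed point. It bounds the connected strip $c_1<y<c_2$. This is the exact analogue of the pair of rays you accept in the hyperbolic case, and the paper's proof (Corollary \ref{cor-11par}) treats both possibilities explicitly. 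Your statements that an invariant curve ``is determined by one point'' and that ``the vertical half-planes $y=c$ already give \dots\ the answer'' leave the two-orbit case unproved. No totally geodesic surface has asymptotic boundary two tangent circles, so a genuinely new surface is needed.

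\textbf{How to close it.} The missing surface is the non-totally-geodesic parabolic minimal surface of do Carmo--Dajczer, which the paper supplies in Theorem \ref{teo-exis-par}. In your normalization the orbit space is the half-plane of $(y,z)$ with $g^*=(dy^2+dz^2)/z^4$. Since $y$-translations are isometries of $g^*$, the associated first integral (the same reduction you plan for the other cases) gives the $g^*$-geodesic symmetric about $\{y=0\}$ with highest point at height $z_0$. It is the bigraph $y=\pm\int_z^{z_0}t^2(z_0^4-t^4)^{-1/2}\,dt$, which is complete and has endpoints $y=\pm z_0L$, where $L=\int_0^1t^2(1-t^4)^{-1/2}\,dt<\infty$. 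Euclidean homotheties normalize $G$ and $y$-translations commute with it, so both carry $G$-invariant minimal surfaces to $G$-invariant minimal surfaces. Taking $z_0=(c_2-c_1)/(2L)$ and translating therefore gives the required properly embedded, simply connected surface.

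\textbf{The other cases.} With this case added, your plan covers the statement as the paper reads it. For the hyperbolic and helicoidal cases, your shooting-plus-intermediate-value argument is a legitimate alternative to the paper's route. The paper instead uses the Hadamard property of $(\h^3/G,g^*)$, the orthogonal foliation from Corollary \ref{coro}, and reflection symmetry. For existence alone you need only continuity of the endpoint angle in the foot point and its two limits ($\pi$ and $0$). The monotonicity and disjointness you list are not needed.
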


The technique used to prove this theorem is based on foliations of the manifold. To this end, we use the fact that each of these Killing fields (except the elliptic one) induces a one-parameter group of isometries acting on $\mathbb{H}^3$. Under suitable assumptions, we can then associate a quotient manifold $\mathbb{H}^3 / G$ and a Riemannian submersion from $\mathbb{H}^3$ onto $\mathbb{H}^3 / G$, where $G$ is the group of isometries associated with the Killing field. In this way, we show that it is possible to obtain foliations of $\mathbb{H}^3$ from foliations of $\mathbb{H}^3 / G$. Moreover, such foliations can be extended to the asymptotic boundary of $\mathbb{H}^3$, as we prove in the following theorem:

\begin{teo}\label{main}
    Let $G$ be a $1$-parameter subgroup of the isometries of $\h^3$. Then there are $G$-invariant connected open subsets $\{\Omega_s\}_{s\in\R}$ of $\overline{\h^3}$ satisfying:
    \begin{itemize}    
        \item[(i)] $\Omega_s\subset\Omega_t$, if $s\leq t$, and $\displaystyle \h^3 = \bigcup_{s\in\R}\Omega_s;$ 
        \item[(ii)] for each $s\in\R,$ $\Sigma_s=\partial\Omega_s$ is a complete properly embedded minimal $G$-invariant surface and $\{\Sigma_s\}_{s\in\R}$ foliate $\h^3$;
            \item[(iii)] If $G$ is elliptic, then $\displaystyle\{\overline{\Sigma}_s\}_{s\in\R}$ foliates $\overline{\h}^3\backslash \{p,q\}$, where $p$ and $q$ are the $G$-invariant points. Otherwise, $\displaystyle\{\overline{\Sigma}_s\}_{s\in\R}$ foliates $\overline{\h}^3\backslash C$, where $C$ is a $G$-invariant curve in $\partial_\infty\h^3$.
    \end{itemize}    
\end{teo}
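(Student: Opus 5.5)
We split into the four types of one-parameter subgroups: elliptic, parabolic, hyperbolic and helicoidal. The elliptic case is explicit. If $G$ fixes the geodesic $\gamma$, we take $\Sigma_s=\h^2(s)$, the totally geodesic planes orthogonal to $\gamma$ at $\gamma(s)$, and $\Omega_s$ the closed-up half-space on the side of decreasing $s$. Items (i)–(ii) are immediate. For (iii), the asymptotic circles $\partial_\infty\h^2(s)$ are the coaxial circles separating the endpoints $p,q$ of $\gamma$, so they foliate $\partial_\infty\h^3\setminus\{p,q\}$.

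For the other three types, $G$ acts freely and properly on $\h^3$ (for helicoidal and hyperbolic groups we restrict to $\h^3\setminus\gamma$ or handle $\gamma$ separately). This gives a Riemannian submersion $\pi:\h^3\to M=\h^3/G$. Here $M$ is a surface carrying the quotient metric; concretely we use the coordinates of \cite{Manfredo}, in which $G$-invariant surfaces are preimages $\pi^{-1}(c)$ of curves $c\subset M$. A $G$-invariant surface $\pi^{-1}(c)$ is minimal exactly when $c$ is a geodesic of the conformally changed metric $\tilde g=V^2 g_M$, where $V$ is the length of the Killing field (the orbit length). So (ii) reduces to foliating $(M,\tilde g)$ by complete properly embedded $\tilde g$-geodesics $c_s$, and then setting $\Sigma_s=\pi^{-1}(c_s)$ and $\Omega_s=\pi^{-1}(\text{one side of } c_s)$. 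In the parabolic case $M$ is a half-plane $\{(x,y):y>0\}$, and the vertical planes $x=s$ are minimal, so the foliation is explicit. This gives $C$ equal to the point at infinity together with the invariant line. In the hyperbolic and helicoidal cases, $M$ is a half-plane or disk model of the orbit space. There we fix a family of $\tilde g$-geodesics ($G$-invariant catenoid- or helicoid-type surfaces of \cite{Manfredo}) parametrized by an initial point on a transversal curve, for instance the geodesics orthogonal to a fixed $\tilde g$-geodesic $\beta$ through the axis.

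The main obstacle is proving that this family really foliates $M$. The geodesics must be pairwise disjoint, they must be proper (leave every compact set in both directions rather than spiral or accumulate), and they must exhaust $M$. For disjointness we would use the ODE for $\tilde g$-geodesics from \cite{Manfredo}: by the symmetry of $V$ under reflection, each $c_s$ is a graph over a suitable coordinate, and a first integral (Clairaut-type, coming from the extra symmetry of $\h^3$ commuting with $G$) gives monotonicity. Two graphs starting orthogonally from $\beta$ at different heights cannot cross, because at a first crossing the conserved quantities would coincide, forcing equality by uniqueness of ODE solutions. Properness follows from the first integral bounding the curve away from the region where $V\to 0$ or $V\to\infty$. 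Exhaustion follows from continuous dependence on $s$ together with the limits as $s\to\pm\infty$, which escape to infinity in $M$.

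For (iii), we compute the asymptotic boundary of each $c_s$ in the compactification of $M$ by $\partial_\infty\h^3/G$. The explicit asymptotics of the ODE show that the two ends of $c_s$ converge to distinct points of $\partial_\infty\h^3$, which vary monotonically and continuously in $s$, and that the limits as $s\to\pm\infty$ converge to the $G$-invariant curve $C$ (the orbit of the fixed points, or an invariant loxodromic spiral in the helicoidal case). Hence the curves $\partial_\infty\Sigma_s$ are disjoint and fill $\partial_\infty\h^3\setminus C$, and $\overline{\Sigma}_s=\Sigma_s\cup\partial_\infty\Sigma_s$ foliates $\overline{\h}^3\setminus C$.
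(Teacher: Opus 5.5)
Your reduction (leaves are $\pi^{-1}$ of geodesics of $\tilde g=|X|^2g$, chosen orthogonal to a geodesic $\beta$ through the axis) is the paper's. The gap is the step meant to show that these geodesics form a foliation. The Clairaut quantity $\kappa=\tilde g(\dot c,\partial_\phi)$ depends on the velocity. Two leaves orthogonal to $\beta$ at heights $\tau_1\neq\tau_2$ carry the different constants $h(\tau_1)\neq h(\tau_2)$, where $h^2=\tilde g(\partial_\phi,\partial_\phi)$, along their whole length. At a crossing point they simply have different directions, so nothing ``coincides''; ODE uniqueness only excludes contact with equal velocity.

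A first integral cannot rule out transversal crossings. On the round sphere, take $\beta$ a meridian through the pole: the great circles orthogonal to $\beta$ have distinct Clairaut constants (with respect to rotation about the pole), yet all pass through the two poles of $\beta$. Whether normal geodesics to $\beta$ meet is a focal-point question, i.e.\ a curvature question, and that is the ingredient you are missing. The paper computes the curvature of $g^*$ (e.g.\ $K=-2\sech^2\tau-\sech^4\tau$ in the hyperbolic case) and concludes that $(\h^3/G,g^*)$ is a Hadamard surface. Then $\exp^\perp$ along $\beta$ is a diffeomorphism (Theorem \ref{teo-fol}, Corollary \ref{coro}), which gives disjointness, properness and exhaustion at once.

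This curvature input is not a formality. In the helicoidal case, the metric the submersion induces on the slice comes from horizontal lifts, namely $\overline g(Y,Z)-\overline g(O_pY,O_pZ)$ (note the sign compared with \eqref{proj}). Hence $g^*=(\cosh^2\tau+b^2\sinh^2\tau)\,d\tau^2+\cosh^2\tau\sinh^2\tau\,d\phi^2$, whose curvature is $-\frac{4((1+b^2)u+2(1-b^2))}{((1+b^2)u+1-b^2)^2}$ with $u=\cosh 2\tau$, equal to $b^2-3$ on the axis. So the Hadamard argument covers only $|b|\le\sqrt3$. For large $|b|$, the rescaled quotient near the axis is close to the paraboloid metric $(1+s^2)ds^2+s^2d\phi^2$, the $\R^3$ screw-motion quotient. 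One checks that the half-angle $\Phi(\tau_0)$ of the geodesic orthogonal to $\beta$ at $\tau_0$ exceeds $\pi/2$ for small $\tau_0$. Such geodesics therefore cross the one orthogonal to $\beta$ at the origin, so no Clairaut-only argument can give disjointness for every pitch.

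Two smaller points:
\begin{itemize}
\item In the parabolic case, the vertical planes $\{x=s\}$ do foliate $\h^3$. However, their closures all contain $\infty$ and their boundary lines fill $\partial_\infty\h^3\setminus\{\infty\}$. So the exceptional set is the point $\{\infty\}$, not a curve, and there is no distinguished invariant line. The curve $C=\{x=0\}\cup\{\infty\}$ in (iii) comes from the paper's family $\Sigma_k$ (geodesics of $g^*$ orthogonal to $x=0$, with boundaries $x=\pm L/\sqrt k$).
\item Hyperbolic and helicoidal groups act freely on their axis, by translations, so nothing needs to be removed. Deleting $\gamma$ would turn the quotient into a punctured disk and lose the global structure you then use.
\end{itemize}
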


The minimal surfaces presented in this work have already been described by other authors. The parabolic and hyperbolic surfaces were studied in \cite{Manfredo}, while the helicoidal ones were described in \cite{heli}. However, the corresponding foliations were not established in the parabolic and hyperbolic cases in the aforementioned works. In \cite{heli}, the fourth author proved that existence of foliations of $\h^3$ by helicoidal minimal surfaces for the case where the helicoidal Killing field, which is a combination of an elliptic and a hyperbolic field has the angle with the hyperbolic field smaller than the angle with the elliptic one.  In the present work, we generalize this existence result for any helicoidal field. Moreover, we present these families of minimal surfaces in a way that differs from those previously studied.

Finally, we organize this work as follows: in Section 2, we present results related to foliations, and in Section 3, we describe foliations of hyperbolic space by parabolic, hyperbolic, and helicoidal minimal surfaces, as well as how one of these foliations extends to the asymptotic boundary in order to solve the asymptotic Plateau problem.



\section{The Foliation Theorem}

In this section we prove the Foliation Theorem. First, let us set some basic notation and present a minimal version of Proposition $2.3$ from \cite{nelli}.

Let $(M,\overline{g})$ be a complete three-dimensional Riemannian manifold and $X$ a Killing field. Denote by $G$ the one-parameter subgroup of isometries associated with $X$ in $M$ and suppose $G$ acts freely and properly on $M$; hence one can define the quotient manifold. Let $M/G$ be the quotient manifold and equip $M/G$ with the metric $g$ such that the natural projection $\pi:(M,\overline{g})\rightarrow (M/G,g)$ is a Riemannian submersion. We denote $\pi^{-1}(p)$ by $G(p):=\{h(p)\in M \mid h\in G\}$, that is, $G(p)$ is the orbit of $p$ in $M$.

Let $v:M\rightarrow\R$ be the function defined by $v(p)=\overline{g}(X,X)(p)$. Since $X$ is a Killing field on $M$, $v$ is constant along $G(p)$ and consequently $v$ is well defined on $M/G$.

\begin{defi}\label{def-inv}
    Let $\Sigma$ be a surface in $M$ and $G \subset Iso(M)$ a group. We say that $\Sigma$ is $G$-invariant if $G(\Sigma)\subset\Sigma$.
\end{defi}

\begin{teo}\label{red}

  Under the above conditions, let
  $G\subset Iso(M)$ be the one-parameter subgroup associated with the Killing field $X$ and let $\Sigma\subset M$ be a surface invariant under $G$. Let $\alpha$ be a parametrization of the curve $\pi(\Sigma)\subset (M/G,g^*)$ such that $g^*(\alpha',\alpha')\equiv 1$, where $g^*=vg$. Then $\Sigma$ is a minimal surface if and only if $\alpha$ is a geodesic in $(M/G,g^*)$.
\end{teo}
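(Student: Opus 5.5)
The plan is to compute the area of $\Sigma$ in terms of the curve $\pi(\Sigma)$ and apply the first variation formula. Minimality of $\Sigma$ is then the vanishing of the first variation among $G$-invariant variations. This is the classical Hsiang--Lawson reduction.

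\textbf{Step 1: area as a weighted length.} Because $\pi$ is a Riemannian submersion whose fibers are the orbits $G(p)$, the coarea formula gives a product structure on $\pi^{-1}(\beta)$ for any curve $\beta$ in $M/G$. For a compact piece I will take $\beta$ together with a compact fundamental domain of the $G$-action on the fibers, namely an interval $[0,T]$ of the flow parameter. The fiber through $p$ is parametrized by the flow $\phi_t(p)$ with speed $|X|=\sqrt{v}$. Hence
\[
\mathrm{Area}\bigl(\pi^{-1}(\beta)\cap\{\phi_t,\ t\in[0,T]\}\bigr)=T\int_\beta \sqrt{v}\,ds_g=T\int_\beta ds_{g^*},
\]
since $g^*=vg$ has length element $\sqrt{v}\,ds_g$. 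So, up to the factor $T$, the area of a $G$-invariant surface equals the $g^*$-length of its projection.

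\textbf{Step 2: reduce the first variation to invariant variations.} One direction is immediate. If $\Sigma$ is minimal, then it is critical for all compactly supported variations in the fundamental domain. Since the integrand is $G$-invariant, these include the $G$-invariant variations, and those project to arbitrary compactly supported variations of $\alpha$. So $\alpha$ is critical for $g^*$-length, that is, a $g^*$-geodesic.

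For the converse, I will use that the mean curvature $H$ of a $G$-invariant surface is constant along orbits, because $G$ consists of isometries preserving $\Sigma$. The first variation of area along a normal field $f N$ is $-\int_\Sigma 2H f$. I will choose $f=\psi\circ\pi$ with $\psi$ compactly supported on $\alpha$. This field is $G$-invariant, its flow gives invariant surfaces, and by Step 1 the variation is $T$ times the first variation of $g^*$-length, which is zero since $\alpha$ is a geodesic. Because $H$ is invariant, this forces $\int_\alpha H\psi\sqrt{v}\,ds_g=0$ for all $\psi$. As $v>0$ (the action is free), this gives $H\equiv 0$.

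\textbf{Main obstacle.} The delicate point is that orbits may be non-compact lines, so the area is infinite. I handle this by restricting to a slab $\{\phi_t : t\in[0,T]\}$ and checking that invariant normal variations do not produce boundary contributions on the slab faces. This holds because those faces are mapped to each other by $\phi_T$ and the variation commutes with the flow. The remaining ingredient is justifying the product-form area computation. Using the horizontal lift of $\alpha$ as a transversal, the surface is parametrized by $(s,t)\mapsto\phi_t(\tilde\alpha(s))$. Here $\partial_s$ maps to a vector with horizontal part of unit $g$-speed, and its vertical part does not affect the area element since it is parallel to $X$. Therefore the area element is $|X|\,ds_g\,dt=\sqrt v\,ds_g\,dt$, as needed.
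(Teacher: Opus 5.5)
Your argument is correct. It is the Hsiang--Lawson first-variation reduction, and you handle properly the two points that need care here: non-compact orbits, via the slab $t\in[0,T]$ whose face terms vanish for normal variations or cancel under $\phi_T$, and $v>0$, which follows from freeness of the action.

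The paper gives no proof of its own; it quotes the statement as a special case of Proposition 2.3 of \cite{nelli}. What it actually uses afterwards is the pointwise splitting \eqref{eq.sup.inv.}, and that splitting gives a different, purely local proof. The steps are as follows.
\begin{itemize}
\item Since $X$ is Killing, $\nabla_X X=-\tfrac12\nabla v$, so $\nabla_{X/|X|}(X/|X|)=-\tfrac12\nabla\log v$.
\item The unit normal $\eta$ is horizontal because $X$ is tangent to $\Sigma$. By O'Neill's formula, the first term of \eqref{eq.sup.inv.}, computed on a unit-speed horizontal lift of $\alpha$, is the $g$-geodesic curvature $\kappa_g$ of $\pi(\Sigma)$ with respect to $\nu=d\pi(\eta)$.
\item The conformal change formula $\kappa_{g^*}=v^{-1/2}\bigl(\kappa_g-\tfrac12\partial_\nu\log v\bigr)$ then gives $-2H=\sqrt{v}\,\kappa_{g^*}$ at every point.
\end{itemize}

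The two routes trade off as follows. The pointwise route needs no slabs or boundary bookkeeping. It gives an identity, not just the equivalence $H\equiv0\Leftrightarrow\kappa_{g^*}\equiv0$, so it also covers prescribed mean curvature. It is also exactly what produces the ODE systems \eqref{sis.parab} and \eqref{sis.hiperb}. Your route is coordinate-free and explains why the conformal factor is the squared orbit density $|X|^2$. It is also the form that extends directly to higher cohomogeneity.

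Your converse computation already contains this identity in integrated form. For $f=\psi\circ\pi$ you obtain $-\int_{\Sigma_T}2Hf\,dA=T\,\delta L_{g^*}(\alpha)$ on the slab piece $\Sigma_T$. Every normal variation field along $\alpha$ has the form $\psi\,d\pi(N)$, so this one identity gives both implications at once. Your separate argument for the first direction can therefore be dropped. As written, it loosely calls $G$-invariant variations compactly supported, which is only justified after your slab-face cancellation.
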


Theorem \ref{red} is a minimal version of Proposition $2.3$ from \cite{nelli}. Now we can prove the Foliation Theorem.

\begin{teo}\label{teo-fol}

  Under the conditions of Theorem \ref{red} and suppose $M$ is a orientable surface, $M$ admits a foliation $\mathcal{F}$ by $G$-invariant minimal surfaces if and only if there exists a complete embedded curve $\Gamma$ in $M/G$ such that the map $\exp^{\perp}:(T\Gamma)^\perp\rightarrow M/G$, given by $\exp^\perp(q,v)=\exp(q,v)$, where $q\in\Gamma$ and $v\in T_q\Gamma^\perp$, is a diffeomorphism. Furthermore $\Sigma\in\mathcal{F}$ if and only if there is a geodesic $\gamma$ of $(M/G,g^*)$ orthogonal to $\Gamma$ such that $\Sigma=\pi^{-1}(\gamma)$.
  
\end{teo}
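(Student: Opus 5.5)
The plan is to push everything down to the two-dimensional quotient. Since $M$ is three-dimensional and $G$ is one-dimensional, $M/G$ is a surface (the hypothesis ``$M$ orientable surface'' should be read as $M/G$ being an orientable surface). By Theorem \ref{red}, a $G$-invariant surface $\Sigma\subset M$ is minimal exactly when $\pi(\Sigma)$ is a geodesic of $(M/G,g^*)$ with $g^*=vg$. Moreover $\Sigma=\pi^{-1}(\pi(\Sigma))$, because $\Sigma$ is $G$-invariant and the fibres of $\pi$ are the orbits. Hence foliations of $M$ by $G$-invariant minimal surfaces correspond bijectively, via $\Sigma\mapsto\pi(\Sigma)$ and $\gamma\mapsto\pi^{-1}(\gamma)$, to foliations of $M/G$ by complete embedded $g^*$-geodesics. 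Here I use that $\pi$ is a submersion with connected fibres, so preimages of a foliation by curves form a foliation by surfaces and conversely. The theorem is therefore equivalent to a statement about $(M/G,g^*)$ alone: it admits a foliation by geodesics if and only if there is a complete embedded curve $\Gamma$ whose normal exponential map is a diffeomorphism, and the leaves are then the normal geodesics to $\Gamma$.

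\emph{Sufficiency.} Suppose $\exp^\perp:(T\Gamma)^\perp\to M/G$ is a diffeomorphism. The normal bundle of $\Gamma$ is a line bundle over a curve. Since $M/G$ is orientable and $\Gamma$ is embedded, it is trivial, so $(T\Gamma)^\perp\cong\Gamma\times\R$ with coordinates $(q,t)\mapsto\exp(q,t\nu(q))$ for a unit normal $\nu$. The images $\gamma_q(t)=\exp(q,t\nu(q))$ are geodesics, complete because they are defined for all $t$. They are pairwise disjoint and cover $M/G$ because $\exp^\perp$ is bijective. They are embedded, and their union is locally a product because $\exp^\perp$ is a diffeomorphism, so $\{\gamma_q\}_{q\in\Gamma}$ is a foliation. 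Taking $\mathcal F=\{\pi^{-1}(\gamma_q)\}$ gives a foliation of $M$ by $G$-invariant minimal surfaces, by Theorem \ref{red}. This also yields the ``furthermore'' description, since each leaf is $\pi^{-1}$ of a geodesic orthogonal to $\Gamma$.

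\emph{Necessity.} Given $\mathcal F$, project to obtain a foliation $\mathcal G$ of $M/G$ by geodesics. Since $M/G$ is orientable and the foliation is one-dimensional, I would choose a unit vector field $Y$ tangent to the leaves and let $\Gamma$ be an integral curve of the orthogonal field $JY$, extended maximally. Two things must then be shown. First, each leaf of $\mathcal G$ meets $\Gamma$ exactly once and orthogonally. Second, $\exp^\perp$ is a diffeomorphism, meaning it is bijective and has no focal points. For the local diffeomorphism property, look at the variation through the geodesic leaves: the transversal Jacobi fields $J$ along each leaf never vanish, because neighbouring leaves are disjoint. So $\exp^\perp$ has nonsingular differential wherever it is defined. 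Bijectivity then reduces to each leaf crossing $\Gamma$ once, together with $\Gamma$ being complete and embedded.

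The main obstacle is this global part of necessity. A curve orthogonal to a geodesic foliation need not cross every leaf, and it could in principle spiral or close up. I would first try to show that the leaf space of $\mathcal G$ is Hausdorff and diffeomorphic to $\R$, using that the leaves are properly embedded complete geodesics: they come from properly embedded surfaces, and geodesics in a simply connected quotient cannot recur. The orthogonal curve $\Gamma$ is then a section of the projection to the leaf space, since it is transversal and its projection is a local diffeomorphism onto an interval. Completeness of $\Gamma$ would be used to show that this section is onto. If the leaf-space argument fails in general, I would fall back on the concrete quotients used later in the paper (parabolic, hyperbolic, helicoidal), where $M/G$ is simply connected.
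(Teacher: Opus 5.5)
Your sufficiency argument is correct and is the paper's argument: the normal geodesics to $\Gamma$ foliate $M/G$ because $\exp^\perp$ is a diffeomorphism, and you lift them by $\pi^{-1}$ and apply Theorem~\ref{red}. You are more careful than the paper about the triviality of the normal bundle and the local product structure.

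For necessity you use the paper's construction, an integral curve of the unit field orthogonal to the projected geodesic foliation. The step you single out as the main obstacle is exactly the one the paper's proof skips. The paper only observes that the integral curve is complete because the field is unitary. That says nothing about $\Gamma$ being embedded, meeting each leaf exactly once, or $\exp^\perp$ being a diffeomorphism. You leave this step open as well. The leaf-space route is only sketched and assumes simple connectivity of $M/G$, which is not a hypothesis. The fallback to concrete cases does not prove the general statement.

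In fact the step cannot be closed under the stated hypotheses, because the ``only if'' direction is false. Take $M=T^2\times\R$ with the flat product metric and $X=\partial_z$ along the $\R$ factor. The translations in $z$ act freely and properly, $v\equiv 1$, and $(M/G,g^*)$ is the flat torus. The surfaces $\{y=c\}\times\R$ foliate $M$ by $G$-invariant totally geodesic cylinders. Yet no embedded curve $\Gamma\subset T^2$ can have $\exp^\perp$ a diffeomorphism, since $(T\Gamma)^\perp$ is a line bundle over a $1$-manifold and hence noncompact. An irrational-slope geodesic foliation of $T^2$ also shows that the orthogonal integral curve need not be embedded. So an extra hypothesis is required. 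The paper only ever uses the ``if'' direction, in Corollary~\ref{coro}, so nothing downstream is affected.

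Your instinct that simple connectivity is the missing ingredient is right, but you do not need any leaf-space analysis. Assume $M/G$ is simply connected and $g^*$ is complete, so the leaves are complete geodesics.

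\begin{itemize}
\item Let $T$ be a unit tangent field to the leaves. It exists because a line field on a simply connected surface is orientable.
\item Put $\omega=g^*(T,\cdot)$. One computes $d\omega(T,Y)=g^*(\nabla_T T,Y)=0$, and in dimension two this forces $d\omega=0$. Hence $\omega=df$ with $\nabla f=T$.
\item Along each leaf $f$ is an arclength parameter, so it maps the leaf bijectively onto $\R$; in particular no leaf is closed or recurrent.
\item Consequently $\Gamma:=f^{-1}(0)$ is a properly embedded curve, complete since $g^*$ is, orthogonal to the leaves and meeting each one exactly once.
\item Let $\phi_t$ be the flow of $T$. The map $p\mapsto\phi_{-f(p)}(p)$ is a continuous surjection of $M/G$ onto $\Gamma$, so $\Gamma$ is connected.
\item Finally, $\exp^\perp(q,tT(q))=\phi_t(q)$ is a diffeomorphism $\Gamma\times\R\to M/G$, with inverse $p\mapsto(\phi_{-f(p)}(p),f(p))$.
\end{itemize}

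Since the level sets of $f$ are exactly the maximal integral curves of the orthogonal field, this also justifies, in that setting, the construction that you and the paper use.
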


\begin{prova}
    Suppose there exists a curve $\Gamma$ in $M/G$ such that $\exp^\perp:(T\Gamma)^\perp \rightarrow M/G$ is a diffeomorphism. Given $p\in\Gamma$, consider $\alpha:(-\epsilon,\epsilon)\rightarrow M/G$ to be a parametrization of $\Gamma$ in a neighborhood of $p$ such that $\alpha(0)=p$. Define $F:(-\epsilon,\epsilon)\times\mathbb{R}\rightarrow M/G$ by $F(s,t)=\exp^\perp\big(\alpha(s),tv\big)$, with $|v|=1$. Since we can perform this process for each $p\in\Gamma$, we obtain a foliation $\mathcal{F}^*$ of $M/G$ by geodesics orthogonal to $\Gamma$. Now define the set
\[
\mathcal{F}:=\{\Sigma\subset M \mid \Sigma=\pi^{-1}(\gamma),\ \gamma\in\mathcal{F}^*\}.
\]
Since $\mathcal{F}^*$ is a foliation of $M/G$, $\mathcal{F}$ is a foliation of $M$ by $G$-invariant surfaces. Furthermore, by the Reduction Theorem, each leaf of $\mathcal{F}$ is a minimal surface.

Now suppose there exists a foliation $\mathcal{F}$ of $M$ by minimal surfaces that are $G$-invariant. Set
\[
\mathcal{F}^*:=\{\gamma\subset M/G \mid \gamma=\pi(\Sigma),\ \Sigma\in\mathcal{F}\}.
\]
Thus $\mathcal{F}^*$ is a foliation of $M/G$. Furthermore, since each leaf of $\mathcal{F}$ is a $G$-invariant minimal surface, by the Reduction Theorem each leaf of $\mathcal{F}^*$ is a geodesic in $(M/G,g^*)$. Observe that, since $M$ is orientable, the quotient $M/G$ is also orientable. Since $\mathcal{F}^*$ is a foliation of $M/G$ by geodesics, we can define a continuous unit vector field $X$ on $M/G$ orthogonal to the leaves of the foliation, in such a way that $X$ together with the velocity vector of each geodesic forms a positively oriented basis. Therefore, $X$ is globally defined. Consequently, we can define the curve $\Gamma$ as an integral curve of the vector field $X$, and since $X$ is unitary, $\Gamma$ is complete.
\end{prova}

We can apply the Foliation Theorem to a very large class of manifolds — Hadamard manifolds, as we will see as a corollary of this theorem in the next result.

\begin{cor}\label{coro}
   In the context of the previous results, if $(M/G,g^*)$ is a Hadamard manifold, then $M$ admits a foliation by complete properly embedded $G$-invariant minimal surfaces.
\end{cor}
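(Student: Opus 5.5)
Since $(M/G,g^*)$ is a Hadamard manifold, it is two-dimensional (as $M$ is three-dimensional and the orbits of $G$ are one-dimensional), complete and simply connected with nonpositive curvature. The plan is to apply the Foliation Theorem (Theorem \ref{teo-fol}). For this we need a complete embedded curve $\Gamma\subset M/G$ whose normal exponential map $\exp^\perp:(T\Gamma)^\perp\to M/G$ is a diffeomorphism. The natural choice is $\Gamma$ equal to a complete geodesic of $(M/G,g^*)$. By the Cartan--Hadamard theorem, $\exp_q$ is a diffeomorphism for every $q$, and any two points are joined by a unique geodesic. In particular every geodesic is a properly embedded line, so $\Gamma$ is a complete embedded curve.

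Next we would verify that $\exp^\perp$ is a diffeomorphism onto $M/G$, which we expect to be the main step.

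\emph{Surjectivity.} Given $x\in M/G$, the distance function $q\mapsto d(x,q)$ along $\Gamma$ is proper, since $\Gamma$ is properly embedded. Hence it attains a minimum at some $q$, and the minimizing geodesic from $q$ to $x$ meets $\Gamma$ orthogonally by the first variation formula.

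\emph{Injectivity.} Suppose two distinct normal geodesics from $q_1,q_2\in\Gamma$ met at $x$. Together with the segment of $\Gamma$ they would form a geodesic triangle with two right angles. By Gauss--Bonnet, the angle sum of a geodesic triangle in nonpositive curvature is at most $\pi$, so the third angle would be $0$, which is impossible. If $q_1=q_2$, the two normal directions are $\pm\nu$, and they meet again only if geodesics from a point intersect twice, which is impossible in a Hadamard manifold.

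\emph{Immersion.} The normal exponential map has no focal points along $\Gamma$. Since $\Gamma$ is totally geodesic and $K\le 0$, the $\Gamma$-Jacobi fields $J$ satisfy $J''=-KJ$ with $J(0)=1$, $J'(0)=0$, so $J$ is convex and never vanishes.

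Hence $\exp^\perp$ is a bijective local diffeomorphism, that is, a diffeomorphism. (Orientability of $M$ is assumed as in Theorem \ref{teo-fol}; otherwise one passes to a quotient argument, but we keep the hypotheses of the previous results.)

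Theorem \ref{teo-fol} then yields a foliation of $M$ whose leaves are $\Sigma=\pi^{-1}(\gamma)$, with $\gamma$ a geodesic orthogonal to $\Gamma$. These leaves are minimal by Theorem \ref{red} and $G$-invariant by construction.

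It remains to check the leaf properties. Each $\gamma$ is a complete geodesic of a Hadamard manifold, hence a properly embedded closed subset. Since $\pi$ is a submersion with connected fibers that is proper onto its image in the sense of closed preimages, $\pi^{-1}(\gamma)$ is a closed embedded submanifold, so it is properly embedded. Completeness follows because a closed subset of the complete manifold $M$ is complete with the induced metric, and intrinsic distance dominates extrinsic distance.
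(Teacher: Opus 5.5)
Your proposal is correct and follows the same route as the paper: take a geodesic of the Hadamard quotient $(M/G,g^*)$ as $\Gamma$, observe that $\exp^\perp$ is a diffeomorphism, and invoke the Foliation Theorem. The difference is in detail only. You actually prove the facts the paper asserts as immediate consequences of the Hadamard hypothesis: surjectivity, injectivity and absence of focal points for $\exp^\perp$, and that the leaves $\pi^{-1}(\gamma)$ are properly embedded and complete.
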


\begin{prova}
   Let $\gamma$ be a geodesic in $(M/G,g^*)$. Set $\exp^\perp:(T\gamma)^\perp\rightarrow (M/G,g^*)$. Since $(M/G,g^*)$ is a Hadamard manifold, $\gamma$ is a complete embedded curve in $(M/G,g^*)$ and $\exp^\perp$ is a diffeomorphism. By the Foliation Theorem, there is a foliation $\mathcal{F}$ of $M$ by $G$-invariant minimal surfaces. 
\end{prova}

\begin{obs}
    When the sectional curvature of a Hadamard manifold is strictly negative, the foliation extends to the asymptotic boundary.
\end{obs}

\section{Foliations of the hyperbolic space $\h^3$}\label{sec}

In this section, we use the results of the previous section together with some integration of ODEs in order to describe the complete simply connected minimal surfaces of $\h^3$ that are invariant under the action of a one-parameter subgroup of $Iso(\h^3)$. 

Let
\begin{equation} \label{eq-H3ret}
\h^3:=\{(x,y,z)\in\R^3 \mid z>0\}, \qquad ds^2=\frac{dx^2+dy^2+dz^2}{z^2}
\end{equation}
be the upper half-space model of $\h^3$.
We denote by $\overline{g}$ the metric associated with this inner product.

The asymptotic boundary of $\h^3$ can be identified with $\{z=0\} \cup \{\infty\}$.

A Killing field in $\h^3$ is of one of the following four types: elliptic, parabolic, hyperbolic and helicoidal. The classification is based on the kind of object obtained by applying the flow of the Killing field at a generic point of the space (a circle, a horocircle, a curve equidistant from a geodesic or a helix). This is an intrinsic property of $\h^3$, but we choose to use a model in order to simplify the computations and figures. A proof of this classification can be found in Section 3 of \cite{Jaime}, by Fornari and Ripoll. The action of an isometry associated with a Killing field extends naturally to the asymptotic boundary of $\h^3$.


\begin{obs}\label{obss}
    Since the action associated with an elliptic Killing field is not free, we cannot apply the techniques presented in this section. However, a hyperbolic plane is an elliptic-invariant surface and a family of hyperbolic planes foliates $\overline{\h^3}$. Thus we can use this family of hyperbolic planes to prove Theorem \ref{main} when $X$ is a rotational Killing field.

    \begin{figure}[H]
    \centering
    \caption{Family of hyperbolic planes $\Sigma_s$}
    \vspace{0.5cm}\includegraphics[scale=0.35]{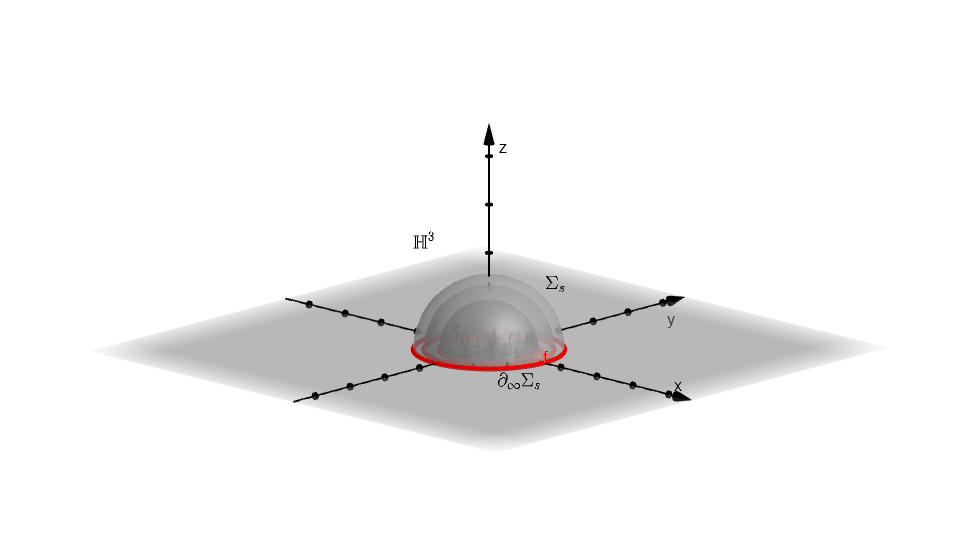}
\end{figure}
\end{obs}

\subsection{Parabolic Killing fields}

In this subsection we present a foliation of $\h^3$ by parabolic minimal surfaces.

\begin{defi}
    A Killing field in $\h^3$ is called parabolic if its orbits are horocircles.
\end{defi}

An invariant surface (see Definition \ref{def-inv}) is called parabolic if it is invariant under the subgroup of isometries associated with a parabolic Killing field.

Given a parabolic Killing field $X$, we may assume, after applying an isometry of $\h^3$, that $X:=a\frac{\partial}{\partial y}$ for some $a\in\R$, $a\neq 0$. Since we are interested in invariant surfaces, we may assume $a=1$. Let  
$\h^2=\{(x,y,z)\in \h^3 \mid y=0\}\subset\h^3$, which is a totally geodesic surface orthogonal to $X$. We can use this setting to compute the metric $g^*$ from Theorem \ref{red}.

Let $G$ be the $1$-parameter subgroup of $Iso(\h^3)$ associated with $X$ and define the projection
\begin{eqnarray*}
    \pi: \h^3 & \rightarrow & \h^2 \\
    (x,y,z) & \mapsto & (x,0,z).
\end{eqnarray*}
Since $X$ is orthogonal to $\h^2$, the metric induced by $\h^3$ on $\h^2$ makes $\pi$ a Riemannian submersion. Therefore the metric $g^*$ on $\h^3/G$ is 
$$\displaystyle ds^2=\frac{dx^2+dz^2}{z^4},$$
since $\displaystyle |X|^2=\frac{1}{z^2}$. Hence the sectional curvature $K$ of $(\h^3/G,g^*)$ is 
$$\displaystyle K(z)=-\frac{2}{z^2}.$$ 
Therefore $(\h^3/G,g^*)$ is a Hadamard manifold and the next result is an application of Corollary \ref{coro}.

\begin{teo}\label{sup.para1}
    For each parabolic Killing field in $\h^3$ there exists a foliation of $\h^3$ by complete properly embedded simply connected parabolic minimal surfaces. 
\end{teo}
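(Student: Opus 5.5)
The plan is to apply Corollary \ref{coro} to the quotient $(\h^3/G,g^*)$, where $g^*=\frac{dx^2+dz^2}{z^4}$ on the half-plane $\{(x,z): z>0\}$, and then check the extra properties (properness, simple connectivity) by hand. The curvature computation above gives $K=-2z^{-2}<0$, so $(\h^3/G,g^*)$ is simply connected and nonpositively curved. What remains is to verify that it is \emph{complete}. Completeness is the main obstacle, because the conformal factor $z^{-4}$ degenerates both as $z\to 0$ and as $z\to\infty$.

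\textbf{Completeness of $g^*$.} Write $g^*=\lambda^2(dx^2+dz^2)$ with $\lambda=z^{-2}$. I would show that every divergent curve has infinite $g^*$-length by examining each way a curve can leave compact sets.
\begin{itemize}
\item A curve escaping toward $z=0$ has length at least $\int_{z_1}^{z_0} z^{-2}\,dz$, which diverges as $z_1\to 0$.
\item A curve escaping toward $z\to\infty$ needs more care, since $\int^\infty z^{-2}\,dz<\infty$. The vertical ray $x=\text{const}$, $z\to\infty$ therefore has finite length, and $g^*$ would appear incomplete.
\end{itemize}

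Before going further I would recheck the metric. The orbits of $X=\partial_y$ have $|X|^2=z^{-2}$, so $v=z^{-2}$, and $g^*=vg=z^{-2}\cdot\frac{dx^2+dz^2}{z^2}$ is exactly the given formula. So the incompleteness at $z=\infty$ is genuine. Note that $z\to\infty$ corresponds to the fixed point $\infty\in\partial_\infty\h^3$ of the parabolic group, so it is not a point of $\h^3$; but the quotient metric $g^*$ is still incomplete there.

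Because of this I would not rely on Corollary \ref{coro} as a black box. Instead I would use Theorem \ref{teo-fol} directly, with an explicit family of geodesics. In the coordinates $u=x$, $w=1/z$ the metric becomes $w^4\cdot\frac{du^2+dw^2}{w^4}=du^2+dw^2$. Indeed $dz=-dw/w^2$, so $dz^2/z^4=dw^2$ and $dx^2/z^4=w^4\,du^2$. Hence $g^*=w^4\,du^2+dw^2$ on $w>0$, and the curves $u=\text{const}$ are unit-speed geodesics, namely the $w$-lines of this warped product.

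I would then take $\Gamma=\{w=1\}$. The map $\exp^\perp$ sends $(u,t)$ to $(u,1+t)$, and it is a diffeomorphism onto the region $w>0$ once $t$ is restricted to $(-1,\infty)$. Each geodesic $u=c$ is inextendible: it runs into the boundary $w=0$ (which is $z=\infty$) in finite time, and since that boundary lies outside $\h^3$, this causes no problem for completeness upstairs. Applying the proof of Theorem \ref{teo-fol} with this restricted domain shows that $\{u=c\}$ foliates the quotient. The leaves $\Sigma_c=\pi^{-1}(\{x=c\})$ are the vertical planes $x=c$, which are totally geodesic copies of $\h^2$.

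The last step is to check the stated properties of these leaves. Each $\Sigma_c$ is isometric to $\h^2$, so it is complete, simply connected and properly embedded. Each is minimal, as Theorem \ref{red} predicts, and each is invariant under $y$-translations. Finally, the sets $\Omega_s=\{x<s\}$ give the nested family required in Theorem \ref{main}.
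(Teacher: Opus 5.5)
Your proof is correct for the statement as written, but it takes a different and much more degenerate route than the paper's.

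\textbf{The paper's route and your objection to it.} The paper claims $(\h^3/G,g^*)$, with $g^*=z^{-4}(dx^2+dz^2)$, is Hadamard and invokes Corollary \ref{coro}. It then integrates \eqref{sis.parab} to get the leaves $\Sigma_k=\frac{1}{\sqrt k}\Sigma_1$, which are the preimages of the $g^*$-geodesics orthogonal to $\{x=0\}$. Your observation that $g^*$ is incomplete is right: the ray $x=c$, $z\to\infty$, has finite $g^*$-length. So the Hadamard claim, and with it the bare appeal to Corollary \ref{coro}, is not justified as written. Incidentally, your form $dw^2+w^4du^2$ gives $K=-f''/f=-2/w^2=-2z^2$, not $-2z^{-2}$; only the sign matters.

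\textbf{Why the paper's theorem still holds.} The explicit family rescues it. On each half of $\gamma_1$, $|x|$ increases while $z$ decreases, so $\gamma_1$ is star-shaped about the origin. Hence its dilates $\gamma_k$ foliate $\{z>0\}$. Each $\gamma_k$ is $g^*$-complete, since both ends tend to $z=0$, which is at infinite $g^*$-distance.

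\textbf{Your route.} You instead take the lines $u=c$, which reach $w=0$ in finite time, so Theorem \ref{teo-fol} is only used in spirit. You obtain the vertical planes $\{x=c\}$. At that point the quotient machinery is superfluous: these planes are visibly totally geodesic, $\partial_y$-invariant, complete, simply connected and properly embedded.

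\textbf{What each approach buys.} Your route gives a one-line, airtight proof of the theorem. The paper's route gives the nontrivial leaves it actually needs later. Each $\{x=c\}$ has a single orbit, closed up by $\infty$, as its asymptotic boundary, so it is useless for Theorem \ref{teo-exis-par}. Moreover, all the $\overline{\Sigma}_c$ contain $\infty$, so they foliate $\overline{\h^3}\setminus\{\infty\}$ rather than the complement of a $G$-invariant curve. Hence your closing claim that $\Omega_s=\{x<s\}$ realizes Theorem \ref{main} fails for item (iii); that item requires the leaves $\Sigma_k$.
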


In what follows we present another approach to explicitly find a family of parabolic minimal surfaces that foliates $\mathbb{H}^3$. To do this, we look for a generating curve $\gamma$ in the totally geodesic $\h^2=\{y=0\}$ and apply, at each of its points, the flow of the Killing field $X$.

Given a curve $\gamma(t)=(x(t),0,z(t))$, $t \in \R$, in $\mathbb{H}^2$ with $|\gamma'| \equiv 1$, let 
$\Sigma := \{(x(t),s,z(t)) \in \mathbb{H}^3 \mid s, t \in \R\}$ 
be the parabolic surface associated with $X$. Then the mean curvature $H$ of $\Sigma$ is  
\begin{equation}\label{eq.sup.inv.}
-2H = \langle \nabla_{\gamma'}\gamma',\eta \rangle + \Big\langle \nabla_{\frac{X}{|X|}}\frac{X}{|X|},\eta \Big\rangle,
\end{equation}
where $\eta$ is orthogonal to $\Sigma$ and $|\eta| \equiv 1$. Therefore $\Sigma$ is minimal if and only if its mean curvature $H$ vanishes. We can compute equation (\ref{eq.sup.inv.}) in coordinates to obtain

\begin{equation}\label{sis.parab}
\begin{cases}
(x''z' - z''x')z - 2x'z^2 = 0 \\ 
x'^2 + z'^2 = z^2.
\end{cases}
\end{equation}

For $k>0$, we consider the initial condition $\gamma_k(0)=(0,0,k),$ $\gamma_k'(0)=(z^2,0,0)$ and obtain the solutions $\gamma_k(t)=(x_k(t),0,z_k(t))$, $t\in \R$

\begin{equation}
\left\{\begin{array}{lll}
    x_k(t) & = & \frac{1}{\sqrt{k}}\displaystyle{\int_0^t\sech^{\frac{3}{2}}(2v)\,dv}, \\
     \\
    z_k(t) & = & \frac{1}{\sqrt{k}}\sqrt{\sech(2t)}.
\end{array}\right.
\end{equation}

A complete version of this approach can be found in \cite{eu}.

Hence, for each $k>0$, there is a parabolic simply connected embedded minimal surface
$$
\Sigma_k=\{(x_k(t),y,z_k(t)) \mid t\in \R, y\in \R\}.
$$

\begin{obs}
The choice $x_k(0)=0$ is arbitrary since adding a constant to the function $x_k$ keeps it a solution. 
\end{obs}

\begin{obs}\label{rem-doCDac}
    Observe that 
    $$(x_k(t),z_k(t))=\frac{1}{\sqrt{k}}(x_1(t),z_1(t))$$
    and recall that 
    $$\Phi(x,y,z)=\frac{1}{\sqrt{k}}(x,y,z)$$
    is an isometry of $\mathbb{H}^3$ in the half-space model. Therefore the parabolic surfaces $\Sigma_k$ presented above are all congruent to each other by an isometry of the ambient space. These are precisely the minimal surfaces presented by do Carmo and Dajczer in Theorem 3.14 of \cite{Manfredo}.
\end{obs}



The generating curves $\gamma_k$ can also be parametrized with $x$ as a bigraph of a function of $z$ if one observes that
\begin{equation}\label{eq-x1}
x_1(z)=\pm \int_z^1 \frac{z^2}{\sqrt{1-z^4}}\,dz.
\end{equation}

Since Theorem 3.14 of \cite{Manfredo} states that, up to isometries, there is a unique complete parabolic minimal surface in $\h^3$, we have just proved the following result.

\begin{teo}
If $\Sigma$ is a complete parabolic minimal surface in $\h^3$, then up to an isometry of the ambient space it is congruent to
$$
\Sigma_1=\left\{\left(\pm \int_z^1 \frac{z^2}{\sqrt{1-z^4}}\,dz,y,z\right)\mid z\in (0,1],\; y\in \mathbb{R}\right\}.
$$
$\Sigma_1$ is a simply connected surface whose asymptotic boundary consists of two tangent circles at infinity given by
$$
\{(-L,y,0)\mid y\in \R\} \cup \{(L,y,0)\mid y\in \R\}
$$
for
\begin{equation}\label{eq-Lpar}
L=\int_0^1 \frac{z^2}{\sqrt{1-z^4}}\,dz.
\end{equation}
\end{teo}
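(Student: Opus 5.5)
The plan is to combine the uniqueness statement of Theorem 3.14 of \cite{Manfredo} with the explicit family $\Sigma_k$ constructed above. There are three parts: (1) every complete parabolic minimal surface is congruent to some $\Sigma_k$, hence to $\Sigma_1$ by Remark \ref{rem-doCDac}; (2) $\Sigma_1$ has the stated graphical description; (3) its asymptotic boundary is computed directly.

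\textbf{Reduction to $\Sigma_1$.} Let $\Sigma$ be a complete parabolic minimal surface, invariant under a parabolic Killing field $Y$.
\begin{enumerate}
\item Apply an isometry taking the fixed point at infinity of $Y$ to $\infty$. Then $Y=a\,\partial_y$, as in the setup, and the orbits of $Y$ are the horocircles $\{(x,y,z): y\in\R\}$ based at $\infty$.
\item $\Sigma$ is the union of these orbits over the curve $\pi(\Sigma)\subset\{y=0\}$. By Theorem \ref{red}, the generating curve $\gamma$ solves system (\ref{sis.parab}).
\item After translating in $x$ and using the dilations $\Phi$, the point of $\gamma$ where $x'$ attains its extremal configuration can be placed at $(0,0,k)$. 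There the tangent is horizontal, i.e.\ $z'=0$ at the maximum of $z$.
\item By uniqueness of ODE solutions, $\gamma=\gamma_k$, and Remark \ref{rem-doCDac} makes $\Sigma$ congruent to $\Sigma_1$.
\end{enumerate}
Alternatively, I would simply invoke Theorem 3.14 of \cite{Manfredo} together with Remark \ref{rem-doCDac}, which identifies the do Carmo--Dajczer surface with our $\Sigma_k$.

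\textbf{Explicit form of $\Sigma_1$.} For $k=1$ we have $z_1(t)=\sqrt{\sech(2t)}\in(0,1]$, which is even in $t$ with maximum $1$ at $t=0$. Also $x_1$ is odd and strictly increasing, since $x_1'=\sech^{3/2}(2t)>0$. To write $x_1$ as a function of $z$, I would compute the derivatives in terms of $z$:
\begin{itemize}
\item $z'=-\sech^{1/2}(2t)\tanh(2t)$;
\item $\tanh^2(2t)=1-z^4$, so $|z'|=z\sqrt{1-z^4}$;
\item $x'=z^3$.
\end{itemize}
Hence $dx/dz=\mp z^2/\sqrt{1-z^4}$ on the two branches $t\gtrless0$. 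Integrating from $z=1$, where $x=0$, gives (\ref{eq-x1}). This confirms the bigraph description and that $\Sigma_1$ is the image of $(t,y)\mapsto(x_1(t),y,z_1(t))$, an embedded plane, hence simply connected. Completeness follows because $t$ is arc length and $y$-translations are isometries.

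\textbf{Asymptotic boundary.} As $t\to\pm\infty$ we have $z_1\to0$ and $x_1\to\pm L$ with $L=\int_0^\infty\sech^{3/2}(2v)\,dv$. The substitution above turns this into (\ref{eq-Lpar}), and it is finite because the integrand $z^2/\sqrt{1-z^4}$ is integrable near $z=1$.

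So the asymptotic boundary is $\{x=\pm L,\ z=0\}$ together with $\infty$:
\begin{itemize}
\item the closure of $\Sigma_1$ also reaches $\infty$ along the $y$-direction, but only $z\to0$ with bounded $x$ and $|y|\to\infty$ contributes the point $\infty$;
\item the points with $|x|<L$, $z=0$ are not in $\partial_\infty\Sigma_1$, since $z\ge z_1(t)>0$ there, with $z$ bounded away from $0$ on compact $t$-sets.
\end{itemize}
The two lines, each closed up by $\infty$, are circles in $\partial_\infty\h^3$ tangent at $\infty$.

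The main obstacle is the justification in step 3 of the reduction: showing that any complete solution of (\ref{sis.parab}) has a horizontal tangent point, i.e.\ that $z$ attains a maximum. My first approach would be to use the first integral of (\ref{sis.parab}), which makes $x'/z^3$ constant, together with the constraint $x'^2+z'^2=z^2$, to show that $z$ is bounded. Otherwise I rely on \cite{Manfredo}.
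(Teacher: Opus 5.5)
Your fallback route (invoke Theorem 3.14 of \cite{Manfredo} together with Remark \ref{rem-doCDac}) is exactly the paper's argument. Your checks of (\ref{eq-x1}), simple connectivity, completeness and $\partial_\infty\Sigma_1$ are correct, and more explicit than the paper's. The self-contained reduction, however, fails at step 3, and the problem is not just a missing argument: the claim that every complete solution of (\ref{sis.parab}) has a horizontal tangent is false.

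Your first integral is $x'/z^3\equiv c$, and $c=0$ does occur: $x\equiv x_0$, $z=e^{t}$ satisfies both equations of (\ref{sis.parab}). Its orbit surface is the vertical totally geodesic plane $\{x=x_0\}$. This plane is complete, properly embedded, minimal and $\partial_y$-invariant, and it has no horizontal tangent. It is also not congruent to $\Sigma_1$: it has one circle at infinity, not two. So the statement, and the uniqueness imported from \cite{Manfredo}, can only hold for non-totally-geodesic surfaces; the paper itself uses exactly these planes in Corollary \ref{cor-11par}. Your proof must split off $c=0$ explicitly, and your boundedness argument ($c^2z^4\le 1$) is precisely what breaks there.

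For $c\neq 0$ your idea is sound, but boundedness of $z$ alone does not give a point where $z'=0$; you must show the maximum is attained. A clean way is to write $x'=z\cos\theta$ and $z'=z\sin\theta$. Then $x''z'-z''x'=-z^2\theta'$, and the first equation of (\ref{sis.parab}) becomes $\theta'=-2\cos\theta$.

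If $\cos\theta(0)\neq 0$, then $\theta$ is strictly monotone and trapped between consecutive equilibria, $\pm\pi/2$ or $\pi/2,3\pi/2$. So it crosses $0$ (respectively $\pi$) exactly once, and that crossing is the unique horizontal-tangent point, the maximum of $z$. After an $x$-translation, a dilation $\Phi$ and possibly $x\mapsto -x$, you may place this point at $(0,0,1)$ with $\theta=0$. Uniqueness for the Lipschitz first-order system in $(x,z,\theta)$ then gives $\gamma=\gamma_1$.

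If $\cos\theta(0)=0$, then $\theta$ is constant and you are in the totally geodesic case above.
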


Since we have the parametrization, we can use the software GeoGebra to plot the surface:

\begin{figure}[H]
    \centering
    \caption{Parabolic minimal surface $\Sigma$}
    \vspace{0.5cm}\includegraphics[scale=0.45]{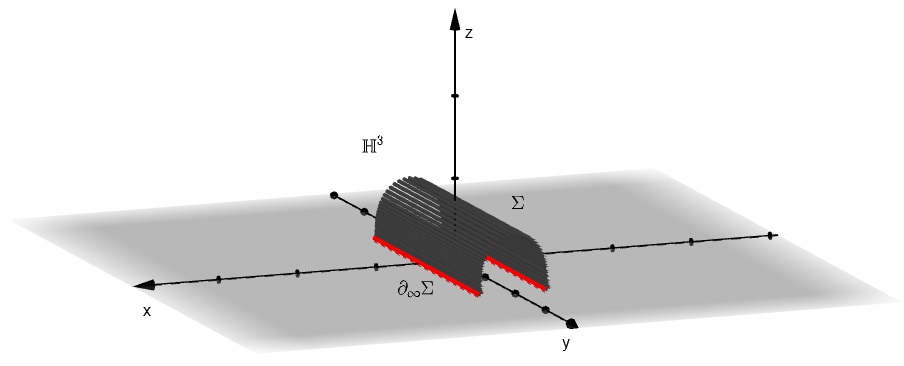}
\end{figure}

Moreover, in order to obtain the curves $\gamma_k$, we followed the proof of the Reduction Theorem and found the geodesics that foliate the quotient space appearing in Theorems \ref{red} and \ref{teo-fol}. Hence the family $\{\Sigma_k\}_{k>0}$ foliates $\h^3$ by complete parabolic embedded simply connected minimal surfaces. Actually, as one can see from their explicit formula, they foliate $\overline{\h^3}\backslash C$, where 
$$
C=\{(0,y,0)\mid y\in \R\}\cup\{\infty\}.
$$

One can also look at these surfaces through the perspective of the asymptotic Plateau problem: the action of the isometry group associated with $X$ extends naturally to the asymptotic boundary of $\h^3$. Thus we have the following result.

\begin{teo}\label{teo-exis-par}
Let $X$ be a parabolic Killing field in $\h^3$ and let $p\in \partial_{\infty}\h^3$ be the unique singularity of $X$. Let $q_1$ and $q_2$ be two points in $\partial_{\infty}\h^3\backslash\{p\}$ with distinct orbits $C_1$ and $C_2$ associated with $X$. Then there exists a unique complete parabolic minimal surface $\Sigma$ with asymptotic boundary $C_1 \cup C_2$.
\end{teo}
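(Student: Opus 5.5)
The plan is to normalize by an isometry and then use the explicit family $\Sigma_1$ together with the isometries that commute with the parabolic flow. First apply an isometry of $\h^3$ that sends $p$ to $\infty$ and turns $X$ into a multiple of $\frac{\partial}{\partial y}$. The orbits of $X$ in $\partial_\infty\h^3\backslash\{\infty\}=\{z=0\}$ are then the lines $\{(c,y,0)\mid y\in\R\}$. Since $C_1\neq C_2$, we may write $C_1=\{(c_1,y,0)\}$ and $C_2=\{(c_2,y,0)\}$ with $c_1<c_2$.

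\textbf{Existence.} The maps $(x,y,z)\mapsto(x+b,y,z)$ and $(x,y,z)\mapsto\lambda(x,y,z)$ with $\lambda>0$ are isometries of $\h^3$ that commute with the flow of $\frac{\partial}{\partial y}$, so they send parabolic surfaces to parabolic surfaces. Take $\lambda=(c_2-c_1)/(2L)$, with $L$ as in (\ref{eq-Lpar}), and $b=(c_1+c_2)/2$. By the previous theorem, $\Sigma_1$ has asymptotic boundary $\{x=\pm L\}\cup\{\infty\}$. Hence the surface
\[
\Sigma=\lambda\Sigma_1+(b,0,0)
\]
is a complete parabolic minimal surface with $\partial_\infty\Sigma=C_1\cup C_2$, where each $C_i$ is understood to contain $p=\infty$. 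We also need $0<L<\infty$, which holds because the integrand $z^2/\sqrt{1-z^4}$ is integrable near $z=1$.

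\textbf{Uniqueness.} Let $\Sigma'$ be any complete parabolic minimal surface with $\partial_\infty\Sigma'=C_1\cup C_2$. By the uniqueness theorem stated above (Theorem 3.14 of \cite{Manfredo}), $\Sigma'=\phi(\Sigma_1)$ for some isometry $\phi$.

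The key step is to show that $\phi$ can be taken to normalize $G$; this is where I expect the real work to lie. A cleaner route is to avoid $\phi$ and argue in the quotient. By Theorem \ref{red}, $\pi(\Sigma')$ is a $g^*$-geodesic in $\h^2=\{y=0\}$, and it is complete because $\Sigma'$ is complete. All geodesics of $(\h^2,g^*)$ are exactly the curves solving (\ref{sis.parab}). From that ODE, and its invariance under translations in $x$ and dilations, every maximal solution is either a vertical line $x=\mathrm{const}$ or a translate of a dilate of $\gamma_1$. To see this, take the point where $z$ is maximal; there $z'=0$. Dilation and translation then match the initial data of $\gamma_1$, and ODE uniqueness identifies the curve.

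It remains to compare asymptotic boundaries. A vertical line lifts to a vertical plane whose asymptotic boundary is a single orbit $C$, not $C_1\cup C_2$. A dilate by $\mu$ translated by $b'$ has asymptotic boundary $\{x=b'\pm\mu L\}\cup\{\infty\}$. Matching this with $C_1\cup C_2$ forces $\mu=\lambda$ and $b'=b$, so $\Sigma'=\Sigma$.

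The main obstacle is the classification of $g^*$-geodesics through the point of maximal $z$. One must check that every non-vertical geodesic attains an interior maximum of $z$. I would try to prove this from the first integral $x'/z^2=\mathrm{const}$. This quantity equals $k$ at the top of $\gamma_k$, and a curve on which it is nonzero cannot escape to $z\to\infty$ while staying complete.
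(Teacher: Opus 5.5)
Your existence step is the paper's proof. The paper centers the orbits at $x=\pm\ell$ and takes $\Sigma_k=\frac{1}{\sqrt{k}}\Sigma_1$ with $L/\sqrt{k}=\ell$, which is your $\lambda\Sigma_1+(b,0,0)$ with the translation moved into the normalization.

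The routes differ on uniqueness. The paper's proof stops after exhibiting $\Sigma_k$ and leaves uniqueness to the do Carmo--Dajczer classification up to isometry quoted before the theorem. You instead classify all generating curves in the quotient. That route is self-contained, and it also disposes of the totally geodesic vertical planes, which any ``unique up to isometry'' statement has to exclude. Three repairs make it airtight:

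\begin{itemize}
\item \textbf{The first integral.} With the parameter of (\ref{sis.parab}), the conserved quantity is $x'/z^3$, not $x'/z^2$; indeed $x_k'/z_k^3\equiv k$, and $x'/z^2$ is conserved only for Euclidean arclength. From (\ref{sis.parab}) one gets $x''=3x'z'/z$. Writing $x'=cz^3$, this gives $z''=z-3c^2z^5$.

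\item \textbf{Attainment of the maximum.} The bound $|c|z^3=|x'|\le z$ only shows that $z$ is bounded, not that its supremum is attained. Suppose $z'$ never vanished. Then $z$ would increase monotonically to some $z_\infty$ at one end, so $z'\to 0$, i.e.\ $c^2z_\infty^4=1$. But then $z''\to z_\infty-3z_\infty=-2z_\infty\neq 0$, a contradiction.

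\item \textbf{Meaning of ``complete''.} For the generating curve, completeness should mean that its $g$-arclength parameter $t$ runs over all of $\R$, since the induced metric on $\pi^{-1}(\alpha)$ is $dt^2+z^{-2}dy^2$. It should not mean $g^*$-completeness: $g^*=(dx^2+dz^2)/z^4$ is not complete, because vertical lines have finite $g^*$-length toward $z=\infty$.
\end{itemize}

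The step you expected to be the real work in the do Carmo--Dajczer route is actually short. Suppose $\partial_\infty\phi(\Sigma_1)=C_1\cup C_2$. Then $\phi$ fixes $\infty$, the only common point of the two boundary circles. So $\phi$ is a Euclidean similarity extended to $\h^3$, and its linear part sends the $y$-direction to $\pm$ itself. The symmetries of $\Sigma_1$ ($y$-translations, $x\mapsto -x$, $y\mapsto -y$) then give $\phi(\Sigma_1)=\lambda\Sigma_1+(b,0,0)$, with $\lambda$ and $b$ forced by the boundary. Either route closes the argument. Yours has the advantage of not depending on the exact scope of the cited classification.
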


\begin{prova}

By fixing the upper half-space model of $\h^3$, one may assume that $X=\frac{\partial}{\partial y}$ and that $C_1$ and $C_2$ are symmetric with respect to the plane $\{x=0\}$. Thus 
$$
C_1=\{(\ell,y,0)\mid y\in \mathbb{R}\}, \qquad 
C_2=\{(-\ell,y,0)\mid y\in \mathbb{R}\}.
$$

Let $L$ be half of the ``width'' of $\Sigma_1$ defined by \eqref{eq-Lpar}. Now consider $k>0$ such that
$$
\frac{1}{\sqrt{k}}L=\ell.
$$
In this case, the asymptotic boundary of $\Sigma_k$ is $C_1 \cup C_2$.
\end{prova}

\begin{cor}\label{cor-11par}
    Theorem \ref{exis} holds if $G$ is a parabolic subgroup of Iso($\h^3$).
\end{cor}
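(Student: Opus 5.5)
To prove Corollary \ref{cor-11par}, I will classify the $G$-invariant connected closed curves $\Gamma\subset\partial_\infty\h^3$ that bound a connected region. Then, for each such curve, I will produce a surface, using Theorem \ref{teo-exis-par} and the explicit family $\{\Sigma_k\}$.

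\textbf{Normalization.} After an isometry, assume $X=\frac{\partial}{\partial y}$. Then $G$ acts on $\partial_\infty\h^3=\{z=0\}\cup\{\infty\}$ by the translations $(x,y,0)\mapsto(x,y+s,0)$ and fixes $\infty$.

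\textbf{Classification of the curves.} The orbits are the lines $\{x=c\}$ together with the fixed point $\infty$. Since $\Gamma$ is $G$-invariant, it is a union of orbits, so $\Gamma$ has the form $(A\times\R)\cup\{\infty\}$ for some set $A\subset\R$. Because $\Gamma$ is connected and closed, $A$ is a closed interval: either a single point, a nondegenerate compact interval $[a,b]$, a closed half-line, or all of $\R$. Only the first two cases give a closed curve, since $A\times\R$ must be one-dimensional.

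\textbf{Case 1: $\Gamma$ is a single orbit.} Here $\Gamma=\{(c,y,0)\mid y\in\R\}\cup\{\infty\}$, which is a round circle through $\infty$. It bounds the vertical totally geodesic plane $\{x=c\}$. This plane is invariant under $y$-translations, so it is parabolic, and it is minimal, complete, properly embedded and simply connected.

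\textbf{Case 2: $\Gamma$ is a union of two orbits.} The remaining closed connected possibility is $C_1\cup C_2$ with $C_1=\{x=a\}$ and $C_2=\{x=b\}$, $a<b$. These are two circles tangent at $\infty$. This set is connected, though it is a closed curve only in a degenerate (figure-eight) sense. It bounds a connected region, namely the strip $a<x<b$, whose closure includes $\infty$. Translate in $x$ so that the two lines are symmetric, $x=\pm\ell$ with $\ell=(b-a)/2$. Choose $k$ with $L/\sqrt{k}=\ell$, as in Theorem \ref{teo-exis-par}. Then $\Sigma_k=\{(x_k(t),y,z_k(t))\}$ has asymptotic boundary exactly $\Gamma$. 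It is $G$-invariant by construction, minimal because its generating curve satisfies \eqref{sis.parab}, simply connected because it is homeomorphic to $\R^2$ via $(t,y)$, and embedded because $x_k$ is strictly monotone in $t$. Completeness and properness follow from $z_k(t)\to0$ as $|t|\to\infty$ while $x_k(t)\to\pm\ell$, so the surface leaves every compact set.

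\textbf{Main obstacle.} The hard step is interpreting the hypothesis so that the case analysis is exhaustive. The key claims are that a connected, $G$-invariant, closed set that is a ``curve'' and bounds a connected region must be one orbit or two orbits. The argument is that any invariant set is determined by its trace $A$ on the line $\{y=0\}$, and that interval traces of positive length which are not two-point unions of lines would produce two-dimensional sets. I expect to make this precise by noting that $\Gamma$ itself is one-dimensional, so $\Gamma$ is determined by its $x$-projection, which is a closed set with at most two points.
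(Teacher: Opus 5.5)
Your constructions match the paper's: the vertical totally geodesic plane $\{x=c\}$ for a single orbit, and $\Sigma_k$ with $L/\sqrt{k}=\ell$ (Theorem \ref{teo-exis-par}) for two orbits. The gap is in the classification step, which is the only real content of the corollary, and as written it is incorrect.

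\emph{Connectedness does not make $A$ an interval.} Every orbit $\{x=c\}$ has $\infty$ in its closure, so $\Gamma=(A\times\R)\cup\{\infty\}$ is connected for \emph{every} closed $A\subset\R$. The case you actually need, $A=\{a,b\}$, is not an interval. Conversely, the nondegenerate interval $A=[a,b]$ that you keep gives a two-dimensional closed strip, not the pair of lines $\{x=a\}\cup\{x=b\}$ that you then treat in Case 2.

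\emph{The proposed repair also fails.} Three parallel lines together with $\infty$ form a connected, closed, one-dimensional $G$-invariant set: three circles tangent at $\infty$. This is a ``closed curve'' in exactly the same degenerate sense as the figure-eight you accept. So one-dimensionality cannot limit $\Gamma$ to two orbits.

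The missing ingredient is the hypothesis that $\Gamma$ bounds a connected region. The paper uses it directly: if $\Gamma$ contains more than two orbits, it cannot bound a connected region. You never use this hypothesis as a constraint; you only check it after the fact in Case 2.

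To make it precise, suppose $\Gamma=\partial\Omega$ with $\Omega\subset\partial_\infty\h^3$ open and connected.
\begin{itemize}
\item Since $\Omega$ is open, $\Omega\cap\Gamma=\emptyset$. So the $x$-projection of $\Omega$ is a connected subset of $\R\setminus A$. It therefore lies in a gap $(c,d)$ of $A$, with $c,d\in A\cup\{\pm\infty\}$.
\item Hence $\overline{\Omega}\subset\{c\le x\le d\}\cup\{\infty\}$, so $\partial\Omega\subset\{x=c\}\cup\{x=d\}\cup\{\infty\}$.
\item Then $\partial\Omega=\Gamma$ forces $A\subset\{c,d\}\cap\R$.
\end{itemize}
Since $\Gamma$ is a curve, $A\neq\emptyset$, so $A$ has one or two points. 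This rules out intervals, Cantor-type sets, and three or more lines at once, and your two cases become exhaustive.
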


\begin{prova}
    If $\Gamma$ is an invariant closed curve as in Theorem \ref{exis}, then it is either an orbit $G(q)$ for some $q\in \partial_{\infty}\h^3$ or the union of two distinct orbits $G(q_1) \cup G(q_2).$ This is true since it must contain the orbit of each of its elements, and if it contains more than two orbits, it will not bound a connected region in $\partial_\infty\h^3.$

    In the first case, $G(q)$ is the boundary of a totally geodesic $\h^2$ obtained applying the flow of $X$ at the geodesic $\gamma \subset \h^2$ that starts at $q$ and ends at $p,$ the singularity of $X.$ This surface can also be seen as the union of all geodesics starting at a assymptotic point of $G(q)$ and ending at $p.$

    If $\Gamma=G(q_1) \cup G(q_2),$ the result is follows from Theorem \ref{teo-exis-par}.
\end{prova}

Let us contextualize and observe that we have proved Theorem \ref{main} in the case where $G$ is parabolic. In this case, we assume that $G$ is the group associated with the parabolic translations $(x,y,z) \mapsto (x, y+k, z)$, $k \in \R$. Then the open connected sets $\Omega_k$, $k>0$, in the theorem are
$$
\Omega_1=\{(x,y,z) \in \overline{\h^3}\mid |x|>|x_1(z)|\},
$$
where $x_1$ is given by \eqref{eq-x1}, and $\Omega_k=\frac{1}{\sqrt{k}}\Omega_1$. The curve $C$ at $\partial_{\infty}\h^3$ from item (iii) of the theorem is $\{x=0\}$.

To conclude our comments on the asymptotic Plateau problem, we can also relate our surfaces to the asymptotic Dirichlet problem (ADP). Let us recall two interesting results about parabolic and hyperbolic Killing graphs, respectively.

\begin{teo}\label{teo-Mi}[Theorem 6 of \cite{Miriam}, adapted for $n=2$]
Let $p$ be a point in the asymptotic boundary of $\h^3$
and $\Gamma \subset \partial_{\infty}\h^3$ be a compact embedded curve passing through $p$. Assume that there are two circles $E_1$ and $E_2$ of $\partial_{\infty}\h^3$ which
are tangent at $p \in E_1\cap E_2$ and such that $\Gamma$ is contained between $E_1$ and $E_2$,
and moreover that any circle of $\partial_{\infty}\h^3$ passing through $p$ orthogonal to $E_1$
intersects $\Gamma$ at exactly one point.
Then, given $|H| < 1$, there exists a unique properly embedded complete
$C^{\infty}$ surface $\Sigma$ of $\h^3$ with CMC $H$ such that $\partial_{\infty}\Sigma =\Gamma$ and $\overline{\Sigma}=\Sigma \cup \Gamma$
is a compact embedded topological surface of $\overline{\h^3}$.
Moreover, $\Sigma$ is a parabolic graph, that is, there exists a totally geodesic
surface $\h^2$ of $\h^3$, a Killing field $Y$ whose orbits in $\h^3$ are horocycles
orthogonal to $\h^2$ and whose orbits in $\partial_{\infty}\h^3$ are the circles orthogonal to $E_1$ at $p$, and a
function $u \in C^{\infty}(\h^3)\cap C^0 (\h^3\setminus\{p\})$ such that $\Sigma$ is the $Y$-graph of $u$.
\end{teo}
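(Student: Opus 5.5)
The plan is to normalize with an isometry, reduce the problem to an asymptotic Dirichlet problem for the parabolic Killing graph equation, and solve it by exhaustion with local barriers. First apply an isometry of $\h^3$, in the upper half-space model, that sends $p$ to $\infty$. Then $E_1$ and $E_2$ become two parallel lines, which we may take to be $\{x=a\}$ and $\{x=b\}$ in $\{z=0\}$ with $a<b$. The circles through $p$ orthogonal to $E_1$ become the horizontal lines $\{y=c\}$, so we take $Y=\frac{\partial}{\partial x}$ and $\h^2=\{x=0\}$. The hypothesis that each such circle meets $\Gamma$ exactly once says that $\Gamma\setminus\{\infty\}=\{(\varphi(y),y,0)\mid y\in\R\}$ for a continuous function $\varphi$ with $a\le \varphi\le b$. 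A $Y$-graph over $\h^2$ is $\{(u(y,z),y,z)\}$. The problem therefore becomes: find $u$ on $\h^2$ solving the quasilinear elliptic equation expressing that the graph has mean curvature $H$ (the Killing graph equation of Dajczer--Hinojosa--de Lira, with $|Y|=1/z$), such that $u(y,0)=\varphi(y)$ and $a\le u\le b$.

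For existence I would exhaust $\h^2$ by bounded domains $D_n$, for example half-discs or geodesic discs, and solve the Dirichlet problem on $D_n$ with boundary data extending $\varphi$, truncated to $[a,b]$. This gives solutions $u_n$, using the known solvability of the Killing graph Dirichlet problem for $|H|<1$ on mean convex domains. Uniform $C^0$ bounds come from comparison with suitable barriers lying between the planes $\{x=a\}$ and $\{x=b\}$. For $H=0$ these planes themselves work. For $H\neq 0$ one uses equidistant surfaces to $\{x=a\}$ and $\{x=b\}$, whose mean curvature is any prescribed $|H|<1$. Interior gradient estimates for Killing graphs (Dajczer--de Lira type) then give local $C^{2,\alpha}$ bounds. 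A diagonal subsequence converges to a solution $u$ on $\h^2$.

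The main obstacle is the asymptotic boundary behaviour, namely showing $u(y,z)\to\varphi(y_0)$ as $(y,z)\to(y_0,0)$, uniformly along the sequence. At each point $(\varphi(y_0),y_0,0)$ I would build upper and lower local barriers. These are pieces of equidistant spheres (CMC $H$) or totally geodesic hemispheres tilted so that they are $Y$-graphs near the point and lie above, respectively below, $\Gamma$ outside a small neighbourhood, using continuity of $\varphi$. This is the standard $|H|<1$ construction of Guan--Spruck, adapted to Killing graphs, and it yields the continuity of $u$ up to $\{z=0\}$. At $p=\infty$ no boundary condition is needed beyond boundedness, which follows from $a\le u\le b$. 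Embeddedness and the fact that $\overline\Sigma=\Sigma\cup\Gamma$ is a compact topological surface of $\overline{\h^3}$ follow because $\Sigma$ is a graph with continuous boundary values.

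Uniqueness follows from the maximum principle. If $\Sigma'$ is another such surface, it lies between the two barriers and has asymptotic boundary $\Gamma$. I would first show that $\Sigma'$ is also a $Y$-graph, by sliding $\Sigma'$ along the flow of $Y$ (that is, translating in $x$) and applying the tangency principle. Then, for two solutions $u$ and $u'$, translate $u'$ by $t>0$ and decrease $t$ to $0$. There is no first interior contact by the comparison principle, and contact at infinity is excluded because both functions extend continuously to $\varphi$ on $\{z=0\}$ and are bounded near $p$. Hence $u'\le u$, and by symmetry $u=u'$. The delicate point here is the behaviour at $p$, where I would use the bound $a\le u,u'\le b$ together with a Phragmén--Lindelöf type argument on the half-plane.
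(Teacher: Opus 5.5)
The paper gives no proof of this statement; it only quotes it from \cite{Miriam}. Your sketch therefore has to stand on its own, and as written it has two genuine gaps.

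\textbf{The existence step.} The Killing-graph Dirichlet theorem you invoke (Dajczer--Hinojosa--de Lira) does not cover arbitrary mean convex domains. It requires the mean curvature of the Killing cylinder over $\partial D_n$ to satisfy $H_{cyl}\ge |H|$, plus a Ricci lower bound in terms of $\inf H_{cyl}$. As in Serrin's classical nonexistence argument, this condition cannot simply be dropped for general boundary data.

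For $Y=\partial_x$ the horocyclic orbit direction is a principal direction of the cylinder. Its principal curvature is $N_3$, the $z$-component of the inward Euclidean unit normal of $\partial D_n$ in the $(y,z)$-plane. Hence $2H_{cyl}=\kappa+N_3$, where $\kappa$ is the geodesic curvature of $\partial D_n$ in $\h^2$. At the highest point of $\partial D_n$ one has $N_3=-1$:
\begin{itemize}
\item for a geodesic disc of radius $R$ this gives $H_{cyl}=(\coth R-1)/2$, which tends to $0$;
\item along the geodesic semicircle $y^2+z^2=R^2$ bounding a half-disc one gets $H_{cyl}=-z/(2R)<0$.
\end{itemize}
So for $H\neq 0$ neither half-discs nor large geodesic discs are admissible, and the solutions $u_n$ are not supplied by the result you cite.

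A natural repair is to avoid large domains altogether. Run Perron's method on all of $\h^2$ between the explicit solutions $u_a=a+\lambda z$ and $u_b=b+\lambda z$, with $\lambda=\pm H/\sqrt{1-H^2}$; these are your equidistant half-planes. Solve Dirichlet problems only on small geodesic balls, where $\coth R$ is large and the theorem applies. Then use your local barriers at the points of $\{z=0\}$ to obtain the boundary values.

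\textbf{The bounds and the point $p$.} For $H\neq0$ the bound $a\le u\le b$ is false. If $\Gamma=\{x=c\}\cup\{p\}$ with $a<c<b$, the solution is the equidistant half-plane $x=c+\lambda z$, which is unbounded near $p$. What your barriers actually give is $u_a\le u\le u_b$. So only differences $u-u'$ are bounded, and your claim that ``no boundary condition is needed at $p$ beyond boundedness'' must be reformulated in those terms.

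More seriously, every $Y$-translate of $\Gamma$ passes through $p$. This affects both sliding arguments in your uniqueness proof: showing that $\Sigma'$ is a $Y$-graph, and showing that $u'+t\ge u$ for $t>0$. In each, the infimum of the gap can be approached along sequences tending to $p$ in $\overline{\h^2}$, including $z\to0$ with $|y|\to\infty$. There, pointwise continuity of the boundary values gives no uniform positive gap.

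Excluding this is the real content of the uniqueness statement, and ``a Phragm\'en--Lindel\"of type argument'' does not do it. The linear equation satisfied by $u'+t-u$ has coefficients depending on $\nabla u$ and $\nabla u'$. You would have to build a barrier at $p$ for that operator, or prove a maximum principle at infinity there. Until that is supplied, the uniqueness part is not a proof.
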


The next result is stated in the half-space model of $\h^3$.

\begin{teo}\label{teo-Sp}[Theorem 4.8 of \cite{Spruck}, also adapted for $n=2$]
Suppose $\Gamma$ is the boundary of a star-shaped $C^0$ domain in $\mathbb{R}^2=\partial_\infty\h^3$
and let $|H| < 1$. Then there exists a unique hypersurface $\Sigma$ of constant
mean curvature $H$ in $\h^3$ with asymptotic boundary $\Gamma$. Moreover, $\Sigma$ may
be represented as the radial graph over the upper hemisphere $\mathbb{S}^2_+\subset \mathbb{R}^3$ of a
function in $C^{\infty}(\mathbb{S}^2_+)\cap C^0 \left(\overline{\mathbb{S}^2_+}\right)$.
\end{teo}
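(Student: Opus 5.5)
Since Theorem \ref{teo-Sp} is quoted from \cite{Spruck}, what follows is only a sketch of how I would reprove it. The plan is to treat $\Sigma$ as a Killing graph for the hyperbolic Killing field $X(p)=p$ generated by the Euclidean dilations $p\mapsto \lambda p$. In the half-space model these dilations are isometries (translations along the geodesic $\{x=y=0\}$), and their orbits are exactly the radial rays.

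\textbf{Setup.} Write $\Sigma=\{e^{u(\theta)}\theta \mid \theta\in\mathbb{S}^2_+\}$. Then the condition that $\Sigma$ has constant mean curvature $H$ becomes a quasilinear elliptic equation $\mathcal{M}_H(u)=0$ on $\mathbb{S}^2_+$. This equation is invariant under $u\mapsto u+c$ when $H=0$, and more generally under dilations. It degenerates at $\partial\mathbb{S}^2_+$, because the hyperbolic metric blows up there.

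Since $\Gamma$ bounds a star-shaped domain, it can be written as $\Gamma=\{e^{\varphi(\theta)}\theta \mid \theta\in\partial\mathbb{S}^2_+\}$ with $\varphi\in C^0$. The asymptotic Dirichlet problem is then to find $u$ with $u=\varphi$ on $\partial\mathbb{S}^2_+$.

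\textbf{Existence.} I would first approximate $\varphi$ by smooth functions $\varphi_j$, and truncate the hemisphere to the domains $\Omega_\varepsilon=\mathbb{S}^2_+\cap\{z\ge\varepsilon|p|\}$. On each $\Omega_\varepsilon$ I would solve the (now uniformly elliptic) Dirichlet problem by the continuity method, which requires three kinds of estimates.
\begin{enumerate}
\item \emph{Uniform $C^0$ bounds.} These come from comparison with barriers: the CMC-$H$ spherical caps (equidistant surfaces) whose asymptotic circles are centered at the origin and enclose, or are enclosed by, $\Gamma$.
\item \emph{Interior gradient bounds.} The function $\langle N,X\rangle$ is a Jacobi field because $X$ is Killing. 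The maximum principle applied to it, as in the classical Killing graph estimates, gives gradient bounds on compact subsets independent of $\varepsilon$.
\item \emph{Higher regularity.} Schauder theory then gives $C^{2,\alpha}_{loc}$ bounds.
\end{enumerate}
Arzel\`a--Ascoli produces a limit solution $u\in C^\infty(\mathbb{S}^2_+)$.

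\textbf{Boundary behaviour (the main obstacle).} The hardest step is to show that the limit satisfies $u\in C^0(\overline{\mathbb{S}^2_+})$ with $u=\varphi$ on the boundary, so that $\partial_\infty\Sigma=\Gamma$. My first attempt would use local barriers at each $q\in\Gamma$. Take CMC-$H$ caps whose asymptotic circles are tangent to a small circle near $q$, lying inside or outside the domain. Because $|H|<1$, these caps meet $\partial_\infty\h^3$ at the fixed angle $\arccos H$, so they shrink to the point $q$ as the circle shrinks. The modulus of continuity of $\varphi$ then controls how far $u$ can deviate from $\varphi$ near $\partial\mathbb{S}^2_+$, uniformly in $\varepsilon$ and $j$. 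Star-shapedness is essential here: it guarantees that the graph over the limit boundary is radial and does not fold back.

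\textbf{Uniqueness.} I would argue by a dilation (Alexandrov-type) sweep. Suppose $\Sigma_1,\Sigma_2$ are two solutions. For large $\lambda$, the dilate $\lambda\Sigma_1$ lies strictly above $\Sigma_2$ along the radial rays; this uses the $C^0$ control at infinity together with the strict star-shapedness of $\lambda\Gamma$ relative to $\Gamma$. Now decrease $\lambda$ to the first contact value. If that value were some $\lambda>1$, contact could not occur at infinity, since $\lambda\Gamma$ and $\Gamma$ are disjoint, so it would be an interior tangency, contradicting the maximum principle for CMC surfaces. Hence $\Sigma_1\ge\Sigma_2$ along rays, and by symmetry $\Sigma_1=\Sigma_2$.
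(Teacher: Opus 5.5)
The paper gives no proof of this statement. It is quoted from Guan--Spruck and used as a black box, so your sketch has to stand on its own. The overall plan is the right one: radial graphs as Killing graphs of the dilation field, truncation to $\{z\ge\varepsilon|p|\}$, equidistant caps as barriers, and a dilation sweep for uniqueness. Two steps, however, do not deliver what you claim.

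\textbf{Uniqueness.} You compare $\Sigma_1$ and $\Sigma_2$ \emph{along the radial rays}, which presupposes that both are radial graphs. That gives uniqueness only within that class. The theorem asserts uniqueness among \emph{all} CMC-$H$ surfaces with $\partial_\infty\Sigma=\Gamma$, with the radial-graph representation as a consequence. The repair is to sweep an arbitrary competitor $\Sigma_2$ against the dilates of the radial solution $\Sigma_1$. Set $U_\lambda=\{t\theta\mid \theta\in\mathbb{S}^2_+,\ 0<t<\lambda e^{u_1(\theta)}\}$. Then:
\begin{itemize}
\item $\Sigma_2\subset U_\lambda$ for $\lambda\gg1$, because $u_1\in C^0(\overline{\mathbb{S}^2_+})$ and $\overline{\Sigma_2}=\Sigma_2\cup\Gamma$ is compact in the closed half-space;
\item $\Sigma_2\cap\overline{U_\lambda}=\emptyset$ for $\lambda\ll1$, because the origin is not in $\overline{\Sigma_2}$;
\item a first contact at some $\lambda\neq1$ cannot occur at infinity, since $\lambda\Gamma\cap\Gamma=\emptyset$. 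It is therefore an interior one-sided tangency, which the maximum principle excludes.
\end{itemize}
For $H\neq0$ you must also fix the orientation, say normals pointing to the side whose asymptotic boundary contains $\Omega$. You then check that the normals of $\lambda\Sigma_1$ and $\Sigma_2$ agree at the contact point. This gives $\Sigma_2\subset\Sigma_1$, hence equality.

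\textbf{Solvability of the truncated problems.} The continuity method on $\Omega_\varepsilon$ needs a priori bounds in $C^{2,\alpha}(\overline{\Omega_\varepsilon})$. Interior gradient and interior Schauder estimates do not provide them. The operator is also not uniformly elliptic on $\Omega_\varepsilon$: truncation removes only the degeneracy of the conformal factor, and uniform ellipticity comes only after a gradient bound. What is missing is a boundary gradient estimate along $\partial\Omega_\varepsilon$. It does hold here. The Killing cylinder over $\partial\Omega_\varepsilon$ is the cone $\{z=\varepsilon|p|\}$, which is the tube of radius $d$, $\cosh d=1/\varepsilon$, around the $z$-axis. Its mean curvature $(\tanh d+\coth d)/2$ exceeds $1>|H|$, and this Serrin-type condition supplies the barriers (cf.\ the Dajczer--Hinojosa--Lira theory of Killing graphs).

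\textbf{Smaller points.}
\begin{itemize}
\item The maximum principle applied to the Jacobi field $\langle N,X\rangle$ does not by itself give interior gradient bounds. Its potential $|A|^2-2$ has no sign, and an interior estimate needs a Korevaar-type cutoff argument.
\item The equation is invariant under $u\mapsto u+c$ for \emph{every} $H$, not only $H=0$, since this map is exactly the dilation by $e^c$. The resulting absence of a zeroth-order term is what your comparison arguments rely on.
\item For merely continuous $\Gamma$ there are no circles tangent to $\Gamma$ at $q$. The boundary barriers should instead be caps over small disks centred at $e^{\pm\delta}q$, which lie in $\Omega$, resp.\ off $\overline{\Omega}$, by continuity of $\varphi$. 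These caps are not radial graphs, so they must be brought in by a dilation sweep. Dilations keep such disks inside $\Omega$ for $\lambda\le1$, resp.\ off $\overline{\Omega}$ for $\lambda\ge1$, precisely because $\Omega$ is star-shaped. This, rather than the graph not folding back, is where star-shapedness enters the boundary estimate.
\end{itemize}
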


We observe that our existence result (Theorem \ref{teo-exis-par}) cannot be obtained as a consequence of any of the above results. First $C_1 \cup C_2 \subset \partial_{\infty}\h^3$ is not a parabolic graph for any parabolic Killing field in $\h^3$. One may see this by fixing the model of $\h^3$ and considering, without loss of generality, the Killing field $X=\frac{\partial}{\partial y}$. Any parabolic Killing field has as orbits in $\partial_{\infty}\h^3$ either a family of parallel lines or a family of tangent circles meeting at a point $p$. In both cases, $C_1 \cup C_2$, which is the union of two orbits, does not intersect every line $\{x=c\}_{c\in \mathbb{R}}$ transversally at a single point.

For the hyperbolic Dirichlet problem considered by Guan and Spruck, which is already stated in the upper half-space model, we observe that the region between two parallel lines $C_1$ and $C_2$ is an unbounded star-shaped domain; hence the theorem does not apply because of the unboundedness. One could see $C_1 \cup C_2$ bounding a bounded region by applying an isometry of $\h^3$, but then it would be the region between two tangent circles, which is not star-shaped. Nevertheless, our surface $\Sigma$ described in the proof of Theorem \ref{teo-exis-par} is a hyperbolic graph defined in $\overline{\mathbb{S}^2_+}\setminus\{p,q\}$ for $p$ and $q$ such that the line $pq$ is parallel to $C_1$, it is the graph of an unbounded function.

We have thus exhibited an unbounded boundary datum at the asymptotic boundary of $\h^2$ (in this case seen as the upper hemisphere of $\mathbb{S}^2$) for which the asymptotic Dirichlet problem has a solution, which we presented explicitly. 

%
%

\subsection{Hyperbolic Killing fields}

In this subsection we exhibit a foliation of $\h^3$ by hyperbolic minimal surfaces.

\begin{defi}
    We say that a Killing field in $\h^3$ is hyperbolic if the orbits of $X$ are hypercircles, that is, curves equidistant from a geodesic.
\end{defi}

Let $f:(-\pi,\pi)\times\R\times (0,+\infty) \rightarrow \h^3$ be a parametrization of $\h^3$ given by
\begin{equation} \label{f.hyp}
f(\phi,s,\tau)=e^s(\cos{\phi}\tanh{\tau},\sin{\phi}\tanh{\tau},\sech \tau).
\end{equation}
To simplify, we identify points in $\h^3$ with their coordinates.

As in the parabolic case, given a hyperbolic Killing field $X$, we choose a totally geodesic $\h^2\subset\h^3$ which is orthogonal to $X$. We consider this $\h^2$ as the subset
\[
\h^2:=\{(\phi,s,\tau)\in \h^3 \mid s=0\}
\]
and the hyperbolic Killing field as $X=a\,\partial/\partial s$, with $a\neq 0$. Since we are interested in invariant surfaces, we may assume $a=1$. Therefore we can use this model to compute the metric $g^*$.

Let $G$ be the $1$-parameter subgroup of $Iso(\h^3)$ associated with $X$. Since $X$ is orthogonal to $\h^2$, we can identify $\h^2$ with $\h^3/G$; we just need to define the projection
\begin{eqnarray*}
    \pi: \h^3 & \rightarrow & \h^2 \\
    (\phi,s,\tau) & \mapsto & (\phi,0,\tau).
\end{eqnarray*}
Again, since $X$ is orthogonal to $\h^2$, the metric $g$ that makes $\pi$ a Riemannian submersion is precisely the metric $\overline{g}$ restricted to $\h^2$.

Now we determine the metric $g^*$ on $\h^3/G$ from the hypotheses of the Reduction Theorem.

Note that $\displaystyle |X|^2=\cosh^2{\tau}$. Then the metric $g^*$ is given by
\[
ds^2=\cosh^2 \tau \sinh^2{\tau}\,d\phi^2+\cosh^2 \tau\, d\tau^2.
\]
Thus we can compute the sectional curvature $K$ of $(\h^3/G,g^*)$:
\[
K(\tau)=-2\sech^2{\tau}-\sech^4{\tau}.
\]
Therefore $(\h^3/G,g^*)$ is a Hadamard rotationally symmetric manifold and we obtain the following result as an application of Corollary \ref{coro}.

\begin{teo}\label{sup.hyp}
For each hyperbolic Killing field in $\h^3$ there exists a foliation of $\h^3$ by complete properly embedded simply connected hyperbolic minimal surfaces.
\end{teo}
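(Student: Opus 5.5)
The plan is to apply Corollary \ref{coro} to the quotient $(\h^3/G,g^*)$, so I need three facts: $G$ acts freely and properly on $\h^3$; the quotient, identified with $\h^2=\{s=0\}$ with the metric $g^*$ above, is complete and simply connected; and its sectional curvature is nonpositive.

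For the action, in the parametrization $f$ the flow of $X=\partial/\partial s$ is the dilation $(x,y,z)\mapsto e^{t}(x,y,z)$, a hyperbolic isometry along the geodesic $\{x=y=0\}$. It has no fixed points in $\h^3$. It is proper because $s\mapsto e^s p$ is a proper embedding $\R\to\h^3$. Every orbit meets $\{s=0\}$, which is the unit hemisphere, exactly once and orthogonally. So $\pi$ is well defined, and with the induced metric it is a Riemannian submersion.

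Simple connectivity holds because $\{s=0\}$ is diffeomorphic to the plane, with polar coordinates $(\tau,\phi)$ centred at $\tau=0$. For the metric, writing $\rho(\tau)=\int_0^\tau\cosh u\,du=\sinh\tau$ puts $g^*$ in the form $d\rho^2+\cosh^2\tau\sinh^2\tau\,d\phi^2$. This is a rotationally symmetric metric whose warping function $h(\rho)=\rho\sqrt{1+\rho^2}$ satisfies $h(0)=0$ and $h'(0)=1$. Hence it extends smoothly across $\tau=0$, and the $\phi$-interval should be read as a full circle. Completeness follows since $\rho\to\infty$ as $\tau\to\infty$, so every radial ray has infinite length and closed bounded sets are compact.

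The curvature is $K=-h''/h$. A direct computation gives $K(\tau)=-2\sech^2\tau-\sech^4\tau<0$, as stated above. Thus $(\h^3/G,g^*)$ is a Hadamard surface.

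Corollary \ref{coro} now applies. I fix a geodesic $\gamma_0$ of $(\h^3/G,g^*)$, for instance the line $\phi\in\{0,\pi\}$ through the pole, which is a geodesic by reflection symmetry. The normal exponential map along $\gamma_0$ is a diffeomorphism by Cartan--Hadamard. By Theorem \ref{teo-fol}, the sets $\pi^{-1}(\gamma)$, with $\gamma$ a geodesic orthogonal to $\gamma_0$, foliate $\h^3$, and by Theorem \ref{red} each is a $G$-invariant minimal surface.

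For the qualifiers on the leaves:
\begin{itemize}
\item Each $\gamma$ is a complete properly embedded line in the Hadamard surface, so $\pi^{-1}(\gamma)\cong\gamma\times\R$ is embedded and simply connected.
\item Each leaf is properly embedded, since $\pi$ is a proper-fibre submersion and a closed set pulls back to a closed set.
\item Each leaf is complete for the induced metric from $\h^3$, which I check separately because $\gamma$ is complete only for $g^*$. The original metric is $g=g^*/v$ with $v=\cosh^2\tau\ge1$, so $g$-length bounds fail in general. However, a closed, properly embedded surface in the complete manifold $\h^3$ is complete, so properness suffices.
\end{itemize}

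The main obstacle is the regularity and completeness of $g^*$ at the pole $\tau=0$, the point on the axis of the translation. I handle it with the expansion of $h$ near $0$ above, and by noting that $v=|X|^2\ge1$ never vanishes, so $g^*$ is a genuine smooth metric.
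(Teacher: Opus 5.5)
Your proposal is correct and follows the paper's route. Like the paper, you identify $\h^3/G$ with $\{s=0\}$, compute $g^*=\cosh^2\tau\,d\tau^2+\cosh^2\tau\sinh^2\tau\,d\phi^2$ with $K=-2\sech^2\tau-\sech^4\tau<0$, and apply Corollary \ref{coro}; you also check points the paper leaves implicit, namely that the action is free and proper, that $g^*$ is smooth and complete at the pole, and that the leaves are simply connected, properly embedded and complete.
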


As in the parabolic case, we can explicitly obtain a family of hyperbolic invariant surfaces.

We want to find a curve $\gamma:\mathbb{R}\rightarrow\mathbb{H}^2$, which in coordinates is given by
\[
\gamma(t)=\big(\cos\phi(t)\tanh{\tau(t)},\sin\phi(t)\tanh{\tau(t)},\sech{\tau(t)}\big),
\]
such that the flow of $X$ through each of its points generates a minimal surface $\Sigma\subset\mathbb{H}^3$. By writing equation (\ref{eq.sup.inv.}) in these coordinates, we obtain the following system:

\begin{equation}\label{sis.hiperb}
\begin{cases}
(\tau''\phi'-\phi''\tau')\tanh \tau-\phi'(\tau'^2+\tanh^2\tau+1) = 0 \\
\tau'^2+\phi'^2\sinh^2\tau=1
\end{cases}
\end{equation}

Again, it is possible to solve the system (\ref{sis.hiperb}). The solutions are

\begin{eqnarray*}
   \tau_k(t) & = & \tanh^{-1}\left[\left(\frac{\sqrt{4k^2+1}\cosh(2t)-1}{\sqrt{4k^2+1}\cosh(2t)+1}\right)^{1/2}\right], \\
    \\
    \phi_k(t) & = & \int_0^t\frac{2k\sqrt{2}}{\big(\sqrt{4k^2+1}\cosh(2s)-1\big)\sqrt{\sqrt{4k^2+1}\cosh(2s)+1}}\,ds.
\end{eqnarray*}

A complete version of this approach can be found in \cite{eu}.

Hence, for each $k>0$ there exists a hyperbolic simply connected embedded minimal surface
\[
\Sigma_k=\{\big(e^s\cos{\phi_k(t)}\tanh{\tau_k(t)},e^s\sin{\phi_k(t)}\tanh{\tau_k(t)},e^s\sech{\tau_k(t)}\big)\mid t\in \R, s\in \R\}.
\]

\begin{obs}
The initial condition is chosen so that the curve $\gamma$ is orthogonal to the geodesic $\alpha(\tau) := f(0,0,\tau)$ at $t=0$, that is, at $\alpha(\tau_k(0))$.
\end{obs}

\begin{obs}
    We believe that this family of minimal surfaces was already studied by do Carmo and Dajczer and appears in Theorem 5.5 of \cite{Manfredo}.
\end{obs}

Since we have the parametrization, we can use the software GeoGebra to plot the surface:

\begin{figure}[H]
    \centering
    \caption{Hyperbolic minimal surface $\Sigma$}
    \vspace{0.5cm}\includegraphics[scale=0.45]{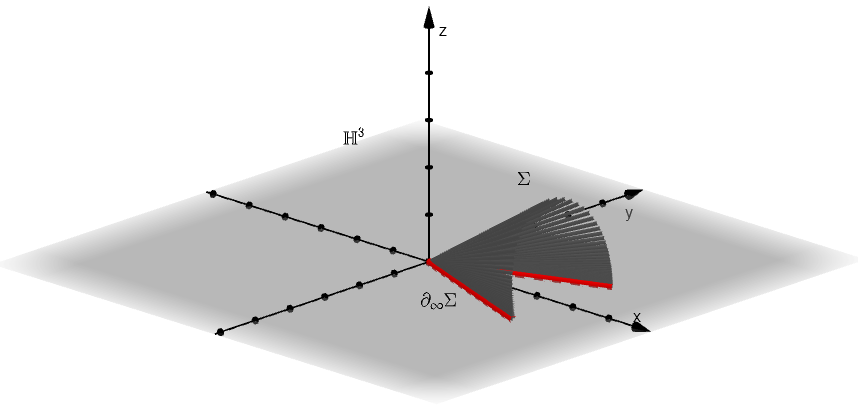}
\end{figure}

The generating curves $\gamma_k$ can also be parametrized with $\phi_k$ as a bigraph of a function of $\tau$:

$$
\phi_k(\tau)=\pm k\int_{\tau_{k,0}}^\tau 
\frac{1}{\sinh(x)\sqrt{\sinh^2(x)\cosh^2(x)-k^2}}\,dx,
$$
where $\tau_{k,0}$ is given by $$\sinh^2(\tau_{k,0})=\frac{\sqrt{4k^2+1}-1}{2} \text{ and }\tau_{k,0}>0.$$

The asymptotic boundary of $\Sigma_k$ is clearly the orbit of $\gamma_k(-\infty)$ and the orbit of $\gamma_k(+\infty)$, which, if one uses the polar coordinates $(e^s,\phi)$ induced by $f$ on $\partial_{\infty}\h^3$, correspond to the rays $\phi=\pm \phi_k(\infty)$.

Up to now we have carried out every explicit computation in half of $\h^3$, considering only the region $x>0$. The family of surfaces $\Sigma_k$ was obtained following the proof of the Foliation Theorem (Theorem \ref{teo-fol}) and therefore they foliate $\h^3\cap \{x>0\}$. In order to foliate the whole hyperbolic space, we must include the leaf corresponding to $k=0$, which we define as the totally geodesic minimal surface $\Sigma_0=\{x=0\}$, and the leaves $\Sigma_{-k}=\rho(\Sigma_k)$, where $\rho$ is the reflection across $\Sigma_0$.

We can now observe that we have proved Theorem \ref{main} for $G$ a hyperbolic subgroup of the isometry group of $\h^3$. Let us define
$$
\Omega_s=\bigcup_{k<s} \Sigma_k \cup D,
$$
for $D=\{(x,0,0)\in \partial_\infty \h^3\mid x<0\}$. These domains $\Omega_s$ are as in the statement of the theorem if, in item (iii), one considers $C=\{y=0\}\subset \partial_\infty \h^3$.

One can also look at these surfaces through the perspective of the asymptotic Plateau problem: 

\begin{teo}\label{teo-exis-hip}
Let $X$ be a hyperbolic Killing field in $\h^3$ and let $p,q\in \partial_{\infty}\h^3$ be the singularities of $X$. Let $q_1$ and $q_2$ be two points in $\partial_{\infty}\h^3\backslash\{p,q\}$ with distinct orbits $C_1$ and $C_2$ associated with $X$. Then there exists a unique complete hyperbolic minimal surface $\Sigma$ with asymptotic boundary $C_1 \cup C_2$.
\end{teo}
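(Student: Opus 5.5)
Following the pattern of Theorem \ref{teo-exis-par}, the plan is to normalize the configuration with an isometry and then pick the right leaf of the explicit family $\{\Sigma_k\}$. After an isometry of $\h^3$ we may assume $X=\partial/\partial s$ in the parametrization \eqref{f.hyp}, so that $p=0$ and $q=\infty$ in $\partial_\infty\h^3=\{z=0\}\cup\{\infty\}$. The orbits of $X$ in $\partial_\infty\h^3\setminus\{p,q\}$ are then the open rays from the origin, $\{\phi=\text{const}\}$ in the polar coordinates $(e^s,\phi)$. Thus $C_1$ and $C_2$ are two distinct rays with angles $\phi_1\neq\phi_2$. A rotation about the $z$-axis commutes with the flow of $X$, so we may assume $C_1,C_2$ are symmetric with respect to the plane $\{y=0\}$: their angles are $\pm\theta$ with $\theta\in(0,\pi)$.

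If $\theta=\pi/2$, the two rays form a line through the origin, and the totally geodesic $\Sigma_0=\{x=0\}$ is the required $X$-invariant surface. If $\theta>\pi/2$, I would apply the reflection $\rho$ across $\Sigma_0$, which commutes with the flow of $X$; this replaces $\theta$ by $\pi-\theta<\pi/2$ and transforms the problem accordingly. So the main case is $\theta\in(0,\pi/2)$.

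In that case I would study the function
$$\Theta(k):=\phi_k(\infty)=k\int_{\tau_{k,0}}^{\infty}\frac{dx}{\sinh x\sqrt{\sinh^2x\cosh^2x-k^2}},\qquad k>0.$$
Since $\partial_\infty\Sigma_k$ is exactly the pair of rays $\phi=\pm\Theta(k)$, it suffices to show that $\Theta$ is a continuous bijection $(0,\infty)\to(0,\pi/2)$.

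\begin{itemize}
\item \emph{Continuity} follows from the explicit integral after the substitution $\sinh^2x\cosh^2x=k^2+u^2$ (or a similar substitution), which removes the endpoint singularity.
\item \emph{Surjectivity and injectivity} come from the foliation structure rather than from computation. The leaves $\Sigma_k$, $k>0$, foliate $\h^3\cap\{x>0\}$ and, by the foliation of $\overline{\h^3}\setminus C$ from Theorem \ref{main}, their closures are pairwise disjoint away from $C=\{y=0\}$. Distinct leaves must therefore have distinct boundary rays, so $\Theta$ is injective, hence strictly monotone. Every point of $\{x>0,\,y\neq0\}$ at infinity lies on some $\overline{\Sigma}_k$, so every angle in $(0,\pi/2)$ is attained.
\item \emph{Consistency check on the limits:} as $k\to0$ the leaves approach $\{y=0\}\cap\{x>0\}$, so $\Theta\to0$; as $k\to\infty$ they approach $\Sigma_0$, so $\Theta\to\pi/2$. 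Direct estimates of the integral should confirm both limits.
\end{itemize}

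Choosing the unique $k$ with $\Theta(k)=\theta$ then gives $\partial_\infty\Sigma_k=C_1\cup C_2$, which proves existence.

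For uniqueness, suppose $\Sigma$ is any complete hyperbolic minimal surface with $\partial_\infty\Sigma=C_1\cup C_2$. By Theorem \ref{red}, $\pi(\Sigma)$ is a complete geodesic of the Hadamard surface $(\h^3/G,g^*)$. Its two ends converge to the ideal points of $\h^3/G$ determined by $C_1$ and $C_2$. Since $K<0$ strictly, two points at infinity are joined by a unique geodesic, so $\pi(\Sigma)=\pi(\Sigma_k)$ and hence $\Sigma=\Sigma_k$. Here one must identify $\partial_\infty(\h^3/G)$ with the circle of angles $\phi$; this follows because $g^*$ is a rotationally symmetric metric whose curvature is bounded above by $-1$, so its visual boundary is the circle of directions.

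The main obstacle I expect is making the boundary correspondence rigorous in two places: showing that $\partial_\infty\Sigma_k$ is exactly the two rays (i.e.\ that $\Theta(k)$ is finite and continuous in $k$), and justifying that the limit of $\Theta$ as $k\to\infty$ is really $\pi/2$. I would first try to handle both through the explicit integral via dominated convergence, and fall back on the comparison and foliation argument above if the estimates become messy.
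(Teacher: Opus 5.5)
Your proposal follows the paper's route. The paper works in the quotient $(\h^3/G,g^*)$, takes the geodesic $\alpha$ from the center $o$ whose reflection swaps $q_1$ and $q_2$, and foliates by geodesics orthogonal to $\alpha$; these lift exactly to your $\Sigma_k$, $\Sigma_0$ and $\rho(\Sigma_k)$. It then asserts, from rotational symmetry, that some leaf joins $q_1$ to $q_2$. It says nothing about uniqueness, so your uniqueness argument is an addition.

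\textbf{The gap: surjectivity of $\Theta$.} The problem is the one step that carries all the content: that $\Theta$ takes every value in $(0,\pi/2)$. You derive it from item (iii) of Theorem \ref{main}. For hyperbolic $G$, that item \emph{is} the statement that the rays $\phi=\pm\Theta(k)$ exhaust all angles, so the argument is circular. Its only support in the paper is the Remark after Corollary \ref{coro}, which is false under $K<0$ alone. For example, take $dr^2+G(r)^2d\phi^2$ with $G'(r)=2-\sech r$. It has $K<0$ everywhere and is asymptotic to a flat cone of angle $4\pi$. The geodesics orthogonal to a ray at large distance end at angles tending to $\pm\pi/4$, so ideal points near the end of the ray are never reached. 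The same objection applies to your injectivity argument via disjoint closures. Injectivity follows instead from your uniqueness argument, and it is not needed for existence.

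\textbf{The fix, with the limits corrected.} Your fallback (continuity of $\Theta$ plus the intermediate value theorem) is the right fix, but you have the limits backwards. The quantity $k=\sinh\tau_{k,0}\cosh\tau_{k,0}$ is the Clairaut constant of $\gamma_k$ for $g^*$.
\begin{itemize}
\item As $k\to0$, $\gamma_k$ passes through the center of the quotient and $\Sigma_k\to\Sigma_0$; this is why the paper labels $\{x=0\}$ as the leaf $k=0$. Hence $\Theta(k)\to\pi/2$.
\item As $k\to\infty$, $\gamma_k$ escapes to the ideal point $\phi=0$, and $\Theta(k)\to0$.
\end{itemize}
Both limits follow by dominated convergence in the $t$-form of $\phi_k$. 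Put $a=\sqrt{4k^2+1}$. The integrand is at most $2\sqrt2\,k\big/\big((a-1)\sqrt a\,\cosh^{3/2}(2s)\big)$. This gives continuity on $(0,\infty)$ and $\Theta(k)\to0$ as $k\to\infty$. For $k\to0$, substitute $s=kw$ and use $a\cosh(2kw)-1\ge\frac43k^2(1+w^2)$ for $k\le1$. This yields $\Theta(k)\to\int_0^\infty\frac{dw}{1+w^2}=\frac\pi2$.

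\textbf{The curvature claim.} $K=-2\sech^2\tau-\sech^4\tau\to0$ as $\tau\to\infty$, so $g^*$ is not bounded above by $-1$. Your uniqueness argument survives without that bound. Rotational symmetry alone identifies the ideal boundary of the quotient with the circle of angles. $K<0$ together with the flat strip theorem forbids two distinct geodesics with the same pair of endpoints. But this decay of $K$ is exactly why existence cannot be taken from general Hadamard theory and needs the computation above.
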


\begin{prova}
Let $q_1$ and $q_2$ be the points in $\partial_\infty \mathbb{H}^3 / G$ corresponding to the orbits $C_1$ and $C_2$, respectively. If we show that there exists a geodesic in $(\mathbb{H}^3 / G, g^*)$ connecting $q_1$ to $q_2$, then Theorem~\ref{red} yields the desired result. 

To prove the existence of such a geodesic, observe that $(\mathbb{H}^3 / G, g^*)$ is rotationally symmetric. Let $o$ is the origin of $(\mathbb{H}^3 / G, g^*)$ and let $m\in \partial_\infty \mathbb{H}^3 / G$  be such that the reflection of $(\mathbb{H}^3 / G, g^*)$ across the geodesic $\alpha$ starting from $o$ and asymptotic to $m$ maps $q_1$ to $q_2$. Since $(\mathbb{H}^3 / G, g^*)$ is rotationally symmetric, this reflection is an isometry.

Since $(\mathbb{H}^3 / G, g^*)$ is a Hadamard manifold, Corollary~\ref{coro} implies that it admits a foliation by geodesics orthogonal to $\alpha$. Furthermore, since $(\h^3/G,g^*)$ is rotationally symmetric, a leaf $\gamma_k$ of this foliation must connect $q_1$ and $q_2$; see Figure \ref{provaa}.

\begin{figure}[H]
\centering
\caption{Illustration of Theorem \ref{teo-exis-hip}}
\label{provaa}
\vspace{0.5cm}
\includegraphics[scale=0.25]{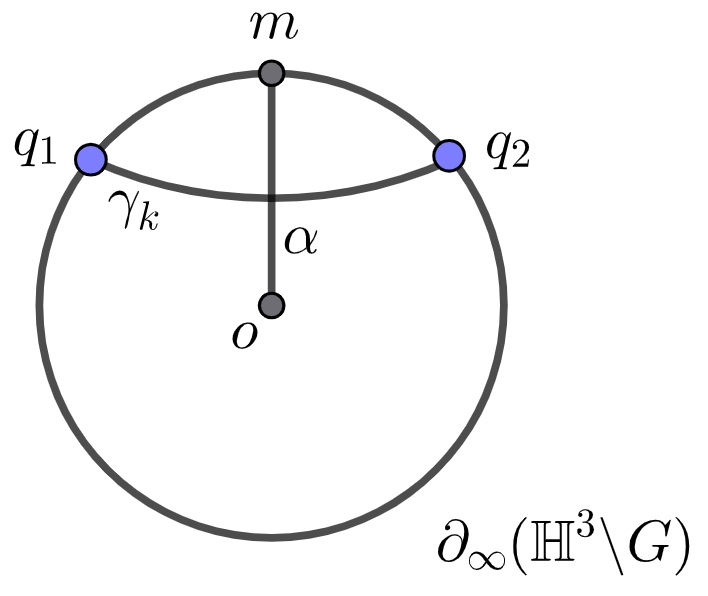}
\end{figure}\textcolor{white}{.}
\end{prova}

As in Corollary \ref{cor-11par}, Theorem \ref{exis} for $G$ is a hyperboilic subgroup of Iso($\h^3$) is as consequence of \ref{teo-exis-hip}. In this case, a single orbit is not a closed curve in $\partial_\infty\h^3,$ so the Theorem reduces to considering the union of two orbits.

We conclude this section by relating the hyperbolic family of surfaces to the asymptotic Dirichlet problem described in Theorems \ref{teo-Mi} and \ref{teo-Sp}. First we observe that, once again, the asymptotic boundary of the hyperbolic minimal surfaces discussed above is not a parabolic graph in $\partial_\infty \h^3$, except in the trivial case of a totally geodesic surface.

Nevertheless, the hyperbolic minimal surfaces are indeed hyperbolic Killing graphs of a bounded function defined on a totally geodesic $\h^2 \subset \h^3$. In order to see this we must deal with two hyperbolic Killing fields: the first one is associated with the invariance of the surface and the second determines the direction of the Killing graph considered. We consider $Y=\partial/\partial s$ associated with the Killing graph. Therefore we must apply an isometry to our surfaces so that they become invariant under the action of another hyperbolic Killing flow.

According to Section 1.1 of \cite{cambri}, the Möbius transformation
$$
\varphi(z)=\frac{z-1}{z+1}
$$
defined for $z\in \mathbb{C}$, with $\mathbb{C}$ identified with $\partial_\infty \h^3$, can be extended to $\h^3$ as an isometry by
$$
\Psi(z,t)=\left(\frac{-2\bar{z}-2}{|z+1|^2+t^2}+1, \frac{2}{|z+1|^2+t^2}\right),\qquad (z,t)\in \mathbb{C}\times \R^+=\h^3.
$$

Here one must consider coordinates given by \eqref{eq-H3ret} in $\h^3$.

The Möbius transformation $\varphi$ takes the unbounded domain 
$$
\Omega_{\alpha}=\{(x,y)\in \R^2 \mid x>0 \text{ and } |y|<x \tan \alpha\}
$$
into the domain bounded by two arcs of circles $C^+$ and $C^-$ connecting $(-1,0)$ to $(1,0)$ and contained in the circles of center $(0,-\cot \alpha)$ and radius ${\rm csc\,}\alpha$.

\begin{figure}[H]
\centering
\caption{Illustration of Theorem \ref{teo-exis-hip}}
\label{provaa}
\vspace{0.5cm}
\includegraphics[scale=0.65]{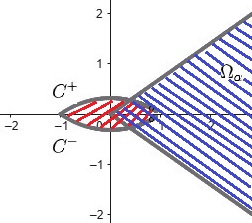}
\end{figure}

Since, according to Theorem \ref{teo-Sp}, there is a unique minimal surface bounded by $C^+$ and $C^-$, the surface $\Psi(\Sigma_{k_0})$, for some $k_0$, is the radial graph given by Theorem \ref{teo-Sp}. Consequently the invariant minimal surfaces can be seen as a particular explicit solution of the hyperbolic asymptotic Dirichlet problem (ADP).

Moreover, we have also proved that if the asymptotic boundary of a star-shaped domain is invariant under the flow of a hyperbolic Killing field, then the solution of the ADP is also invariant under this flow. The relation $\Phi(k_0)=\alpha$, where $\Phi(k)$ denotes the angular half-width of $\Sigma_k$, is given by
\begin{eqnarray}
\Phi(k)&=&\displaystyle{\int_0^{\infty}\frac{2k\sqrt{2}}{\big(\sqrt{4k^2+1}\cosh(2s)-1\big)\sqrt{\sqrt{4k^2+1}\cosh(2s)+1}}\,ds}\nonumber\\
~& &\nonumber\\
&=&\displaystyle{\int_{u_0(k)}^{\infty} \frac{k}{(u^2-1)\sqrt{u^4-u^2-k^2}}\,du}
\nonumber        
\end{eqnarray}
for
$$
u_0(k)=\left(\frac{\sqrt{4k^2+1}+1}{2}\right)^{1/2},
$$
which provides the way to determine the corresponding surface $\Psi(\Sigma_{k_0})$.

\subsection{Helicoidal Killing fields}

In this subsection we prove that there exists a foliation of $\h^3$ by helicoidal minimal surfaces. First we define a rotational Killing field.

\begin{defi}
    We say that a Killing field $X$ in $\h^3$ is rotational if the orbits of $X$ are geodesic circles whose centers lie on a fixed geodesic.
\end{defi}

Now we define a helicoidal Killing field.

\begin{defi}
    We say that a Killing field $X$ in $\h^3$ is helicoidal if $X=Y+Z$, where $Y$ is hyperbolic, $Z$ is rotational and $[Y,Z]=0$.
\end{defi}

The condition on the Lie brackets ensures that the rotation occurs around the geodesic invariant under the hyperbolic Killing field. Thus the orbits of a helicoidal Killing field are a kind of helix, as in $\R^3$, whose axis is the geodesic fixed by the rotation.

Let $f$ from \eqref{f.hyp} be a parametrization of $\h^3$. Let $X$ be a helicoidal Killing field which we assume to be
\[
X=a\frac{\partial f}{\partial s}+b\frac{\partial f}{\partial\phi},
\]
where $a$ and $b$ are real parameters representing the pitch of the helix determined by the orbits of $X$, with $a\neq 0$. Since we want invariant surfaces, we may assume $a=1$. Let $\h^2$ be given by
\[
\h^2:=\{(\phi,s,\tau)\in\h^3 \mid s=0\},
\]
which is a totally geodesic surface transversal to $X$. We use this model to compute the metric $g^*$.

Again, let $G\subset Iso(\h^3)$ be the one-parameter subgroup associated with $X$. This case is different from the previous ones, since the Killing field $X$ is not orthogonal to $\h^2$. Thus we first need to compute the metric $g$ for which $\pi:\h^3\rightarrow (\h^3/G,g)$ is a Riemannian submersion. Since the vector field $X$ is not tangent to $\h^2$, we can equip $\h^2$ with a metric $\hat{g}$ such that $(\h^2,\hat{g})$ is isometric to $(\h^3/G,g)$. We now determine this metric $\hat{g}$.

Given $p\in\h^3$, let $O_p:T_p\h^3\rightarrow T_pG(p)$ be the orthogonal projection onto the orbit of $p$. Then we can determine the metric $\hat{g}$ by
\begin{equation}\label{proj}
    \hat{g}(Y,Z)(p) = \overline{g}(Y,Z)(p)+\overline{g}\big(O_p(Y),O_p(Z)\big),
\end{equation}
where $Y,Z\in T_p\h^2$ and $\overline{g}$ denotes the canonical metric on $\h^3$. Note that if the orbit through $p$ is orthogonal to $\h^2$ for all $p$ in $\h^2$, equation \eqref{proj} shows that the metric $\hat{g}$ coincides with the metric of $\h^3$ restricted to $\h^2$, which is the canonical metric of $\h^2$, as in the two previous cases. 

Now, substituting equation \eqref{proj} into the model for $\h^3$ discussed above, the metric $\hat{g}$ is given by
\[
ds^2=
\Bigg(\frac{\cosh^2{\tau}+2b^2\sinh^2{\tau}}{\cosh^2{\tau}+b^2\sinh^2{\tau}}\Bigg)\sinh^2{\tau}\,d\phi^2+d\tau^2,
\]
since $|X|^2=\cosh^2{\tau}+b^2\sinh^2{\tau}$. Hence the metric $g^*$ is given by
\[
ds^2=(\cosh^2{\tau}+2b^2\sinh^2{\tau})\sinh^2{\tau}\,d\phi^2+(\cosh^2{\tau}+b^2\sinh^2{\tau})\,d\tau^2.
\]

Then the sectional curvature $K$ of $(\h^3/G,g^*)$ is

\begin{eqnarray}\label{curv.hel}
    K(\tau) & = & -\frac{1}{|X|^4\big(|X|^2+b^2(\sinh{\tau})^2\big)^{2}}\bigg(\cosh^6{\tau}\big(10b^2+16b^4+8b^6+2\big) + \nonumber \\ \nonumber
    \\ \nonumber
    & + & \cosh^4{\tau}\big(-5b^2-26b^4-24b^6+1\big) + \cosh^2{\tau}\big(12b^4+24b^6\big)+ \\ \nonumber
    \\ 
    & + & \big(-2b^4-8b^6\big)\bigg).
\end{eqnarray}

Let $g_{b}: \R \rightarrow\R$ be the polynomial function defined by
\begin{eqnarray*}
    g_{b}(x) & = & x^6\big(10b^2+16b^4+8b^6+2\big)+x^4\big(-5b^2-26b^4-24b^6+1\big) + \\
    \\
    & + &x^2\big(12b^4+24b^6\big)+\big(-2b^4-8b^6\big).
\end{eqnarray*}

If $g_{b}$ is positive on $[1,\infty)$, then $K$ is negative. Note that
\[
g_{b}(1)=3+5b^2+12b^4,
\]
which is positive. Moreover, one can prove that $g_{b}'(x)>0$ for all $x>1$. Hence $g_{b}$ is positive. Therefore $(\h^3/G,g^*)$ is a Hadamard manifold and we obtain the following result as an application of Corollary \ref{coro}.

\begin{teo}\label{teohel}
  For each helicoidal Killing field in $\h^3$ there exists a foliation of $\h^3$ by complete properly embedded simply connected helicoidal minimal surfaces. 
\end{teo}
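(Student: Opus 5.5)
The plan is to apply Corollary \ref{coro} to the helicoidal subgroup $G$ generated by $X=\partial f/\partial s+b\,\partial f/\partial\phi$. Everything then reduces to three facts about the quotient: $G$ acts freely and properly on $\h^3$, $(\h^3/G,g^*)$ is a complete simply connected surface, and its curvature is nonpositive.

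First, freeness and properness. I would use the coordinates $(\phi,s,\tau)$ of \eqref{f.hyp}, in which the flow of $X$ is $(\phi,s,\tau)\mapsto(\phi+bt,s+t,\tau)$. This is fixed-point free because the $s$-component is a translation with $a=1\neq0$. Points on the axis $\tau=0$, where $\phi$ degenerates, are also moved, since they are translated along the geodesic. Properness follows because $s$ is a proper coordinate along orbits: $|s(h_t(p))-s(p)|=|t|$.

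The totally geodesic slice $\h^2=\{s=0\}$ meets each orbit exactly once and transversally. So it is a global section, and $\h^3/G$ is diffeomorphic to $\h^2$, hence simply connected. The submersion metric $g$ is identified with $\hat g$ via \eqref{proj}, and the reduced metric is $g^*=|X|^2\hat g$, with $|X|^2=\cosh^2\tau+b^2\sinh^2\tau$, as computed above.

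Second, completeness. The metric $g^*$ is rotationally symmetric in the polar coordinates $(\tau,\phi)$. Its radial coefficient is $\cosh^2\tau+b^2\sinh^2\tau\ge1$, so $g^*$-radial distance dominates $\tau$ and diverges as $\tau\to\infty$. Closed $g^*$-balls are therefore closed subsets of bounded $\tau$-disks, hence compact, and Hopf--Rinow gives completeness.

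I would also check smoothness at $\tau=0$. The angular coefficient behaves like $\tau^2(1+O(\tau^2))$ and the radial one like $1+O(\tau^2)$, so the origin is a regular point and no cone singularity occurs.

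Third, the curvature. Using the formula for $K$ in \eqref{curv.hel}, $K(\tau)=-g_b(\cosh\tau)/(\text{positive})$, so it suffices to show $g_b>0$ on $[1,\infty)$. We have $g_b(1)=3+5b^2+12b^4>0$. For the derivative, set $u=x^2$ and write
\[
g_b'(x)=2x\big(3A u^2+2Bu+C\big),
\]
where $A=10b^2+16b^4+8b^6+2$, $B=-5b^2-26b^4-24b^6+1$ and $C=12b^4+24b^6$. At $u=1$ the quadratic $q(u)=3Au^2+2Bu+C$ equals $8+20b^2+8b^4>0$, and its vertex satisfies $-B/(3A)<1$ because $3A+B>0$. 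Hence $q$ is increasing on $[1,\infty)$ and positive there. This gives $g_b'>0$ for $x>1$, so $g_b>0$ and $K<0$.

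With these facts, $(\h^3/G,g^*)$ is a Hadamard manifold. Corollary \ref{coro} then produces the foliation $\{\pi^{-1}(\gamma)\}$, where $\gamma$ runs over the geodesics orthogonal to a fixed geodesic; one can take the geodesic $\phi=0,\pi$ through the origin. Each leaf is minimal by Theorem \ref{red}. Each leaf is simply connected because it is the product of a geodesic line $\gamma\cong\R$ with the orbits $\cong\R$. Each leaf is properly embedded and complete because $\gamma$ is properly embedded and the projection $\pi$ is a proper trivial bundle.

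The main obstacle is the curvature sign. It rests on the long formula \eqref{curv.hel}, which I would recheck by the standard rotationally symmetric formula $K=-(\sqrt G)_{\tau\tau}/(\sqrt{E}\sqrt{G}\cdot\ldots)$ for $E\,d\tau^2+G\,d\phi^2$, perhaps via a computer algebra check. The positivity argument for $g_b$ is the only place where an error would break the proof.
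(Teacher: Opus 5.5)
\textbf{Verdict: genuine gap, inherited from the paper.} Your plan is the paper's plan: show $(\h^3/G,g^*)$ is Hadamard, then apply Corollary~\ref{coro}. Your extra checks (free and proper action, the global slice, completeness, regularity at $\tau=0$, and the explicit positivity of $3Au^2+2Bu+C$ on $[1,\infty)$) are sound as far as they go.

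The gap sits upstream of all of them. You take the submersion metric to be $\hat g$ from \eqref{proj}, and that formula has the wrong sign. The horizontal part of $w\in T_p\h^2$ is $w-O_p(w)$, so the pulled-back submersion metric is $\overline{g}(Y,Z)-\overline{g}(O_pY,O_pZ)$. Concretely, in the $G$-invariant coordinate $\psi=\phi-bs$ the ambient metric is $d\tau^2+\cosh^2\tau\,ds^2+\sinh^2\tau\,(d\psi+b\,ds)^2$. Minimizing over $ds$ gives
\[
g=d\tau^2+\frac{\sinh^2\tau\cosh^2\tau}{\cosh^2\tau+b^2\sinh^2\tau}\,d\psi^2,\qquad g^*=(\cosh^2\tau+b^2\sinh^2\tau)\,d\tau^2+\sinh^2\tau\cosh^2\tau\,d\psi^2 .
\]
For $b=0$, this and the paper's formula both reduce to the hyperbolic case, which is why the slip is invisible there. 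The curvature of the correct $g^*$ is
\[
K=-\frac{(1+b^2)\cosh 2\tau+2(1-b^2)}{(\cosh^2\tau+b^2\sinh^2\tau)^2},\qquad K(0)=b^2-3 .
\]
O'Neill's formula confirms this at the axis: there $g$ has curvature $-1+3b^2$, and the conformal factor $|X|^2$ subtracts $2(1+b^2)$. So the quotient is Hadamard exactly when $b^2\le 3$. Your $g_b$ argument is correct algebra (incidentally $g_b(1)=3+5b^2$), but it concerns the wrong metric. Rechecking \eqref{curv.hel} against the paper's $g^*$, as you propose, would not reveal this, because the error is in the metric itself.

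Nor can the gap be closed for every $b$, since the statement fails for large $|b|$.
\begin{itemize}
\item By Theorem~\ref{red}, a foliation of $\h^3$ by $G$-invariant minimal surfaces projects to a foliation of the plane $(\h^3/G,g^*)$ by geodesics.
\item The unit tangent field $T$ of such a foliation satisfies $\nabla_TT=0$. Hence $T^\flat$ is closed, $T=\nabla f$ with $|\nabla f|=1$, and every leaf is a globally minimizing line.
\item The leaf through the pole is a whole meridian, say $\psi\in\{0,\pi\}$. Let $\rho(\tau)=\int_0^\tau\sqrt{\cosh^2t+b^2\sinh^2t}\,dt$. The meridian's two points with $\tau=1$ are $2\rho(1)\ge 2|b|(\cosh 1-1)\approx 1.086\,|b|$ apart along it.
\item The half-latitude $\{\tau=1\}$ joins the same two points with length $\pi\sinh 1\cosh 1\approx 5.70$.
\end{itemize}
This is impossible once $|b|\ge 6$.

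So, with the metric corrected, your argument proves the theorem for $b^2\le 3$. For $|b|\ge 6$, no foliation of $\h^3$ by helicoidal minimal surfaces exists at all.
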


Since we do not have a parametrization for the helicoidal surface, we can only present a sketch of the surface:

\begin{figure}[H]
    \centering
    \caption{Helicoidal minimal surface $\Sigma$}
    \vspace{0.5cm}\includegraphics[scale=0.45]{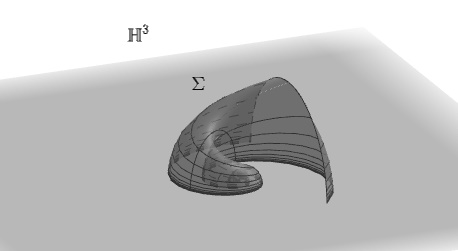}
\end{figure}

As in the hyperbolic and parabolic cases, we now consider these surfaces in the context of the asymptotic Plateau problem and obtain the following result.

\begin{teo}\label{teo-exis-heli}
Let $X$ be a helicoidal Killing field in $\h^3$ and let $p,q\in \partial_{\infty}\h^3$ be the singularities of $X$. Let $q_1$ and $q_2$ be two points in $\partial_{\infty}\h^3\backslash\{p,q\}$ with distinct orbits $C_1$ and $C_2$ associated with $X$. Then there exists a unique complete helicoidal minimal surface $\Sigma$ with asymptotic boundary $C_1 \cup C_2$.
\end{teo}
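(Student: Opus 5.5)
The plan follows the proof of Theorem \ref{teo-exis-hip}, with the extra care needed because $(\h^3/G,g^*)$ is no longer the restriction of $\overline{g}$. By Theorem \ref{red}, a $G$-invariant surface $\Sigma$ is minimal if and only if its projection $\pi(\Sigma)$ is a geodesic of $(\h^3/G,g^*)$. So it suffices to show two things.

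\begin{enumerate}
\item[(a)] The orbits $C_1,C_2$ correspond to two distinct points $\bar q_1,\bar q_2$ of the asymptotic boundary of $(\h^3/G,g^*)$, and a complete geodesic joining them lifts to a surface with asymptotic boundary exactly $C_1\cup C_2$.
\item[(b)] Such a geodesic exists and is unique.
\end{enumerate}

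\emph{Identifying the boundary.} I would use the model with $X=\partial f/\partial s+b\,\partial f/\partial\phi$ and quotient $\h^2=\{s=0\}$, with coordinates $(\phi,\tau)$. Every orbit in $\partial_\infty\h^3\setminus\{p,q\}$ is a logarithmic spiral $s\mapsto(e^s,\phi_0+bs)$ in the polar coordinates induced by $f$. Each such spiral meets the unit circle $\{s=0\}=\partial_\infty\h^2$ exactly once. Hence orbits in $\partial_\infty\h^3\setminus\{p,q\}$ are in bijection with the points of $\partial_\infty\h^2$, that is, with the rays $\tau\to\infty$, $\phi=\phi_0$.

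\emph{Completeness at infinity.} The metric $g^*$ is rotationally symmetric. Its radial coefficient satisfies $\sqrt{\cosh^2\tau+b^2\sinh^2\tau}\sim c\,e^\tau$, so the radial direction is complete. The curvature formula \eqref{curv.hel} shows that $K$ tends to a negative constant as $\tau\to\infty$: both the numerator and the denominator are of order $\cosh^{12}\tau$. Hence $K\le -\kappa^2<0$ outside a compact set, and together with $K<0$ everywhere this gives a Hadamard surface with pinched negative curvature. Its geometric boundary is therefore the circle of directions $\phi_0$, matching the previous paragraph.

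\emph{Existence.} As in Theorem \ref{teo-exis-hip}, let $\alpha$ be the radial geodesic through the origin $o$ at angle $(\phi_1+\phi_2)/2$, where $\phi_1,\phi_2$ are the angles of $\bar q_1,\bar q_2$. Reflection $\phi\mapsto\phi_1+\phi_2-\phi$ is an isometry of $g^*$, since the metric depends only on $\tau$ and $d\phi^2$. By Corollary \ref{coro}, the geodesics orthogonal to $\alpha$ foliate the quotient, and each is symmetric under this reflection. Their endpoints vary continuously and monotonically, from the point $o$ itself (the leaf through $o$ has endpoints at angles $\pm\pi/2$ relative to $\alpha$) out to the endpoint of $\alpha$ as the foot point goes to infinity. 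So some leaf has endpoints exactly $\bar q_1,\bar q_2$; if $\bar q_1,\bar q_2$ are antipodal, take the leaf through $o$.

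Its preimage $\Sigma$ is then a complete, properly embedded, simply connected minimal surface. Its asymptotic boundary is $C_1\cup C_2$, because $G$ acts on $\overline{\h^3}$ and $\pi$ extends continuously to $\partial_\infty\h^3\setminus\{p,q\}$.

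\emph{Uniqueness.} In a Hadamard surface with $K\le-\kappa^2<0$ at infinity, two distinct points at infinity are joined by at most one geodesic, by the flat strip argument. Since every complete $G$-invariant minimal surface with boundary $C_1\cup C_2$ projects to such a geodesic, uniqueness follows.

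\emph{Main obstacle.} The hardest step is making the identification of boundaries rigorous. One must show that a geodesic ray in $(\h^3/G,g^*)$ tending to angle $\phi_0$ lifts to a surface whose closure in $\overline{\h^3}$ meets $\partial_\infty\h^3$ exactly in the corresponding spiral orbit, and not additionally at $p$ or $q$. I would first try to show that along a $g^*$-geodesic going to infinity, $\phi$ converges and $\tau\to\infty$, using Clairaut's relation for rotationally symmetric metrics, and then translate $\tau\to\infty$ in $f$ into approaching $\{z=0\}$ away from $0$ and $\infty$ on each orbit.
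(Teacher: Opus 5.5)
\paragraph{Verdict.} Your outline is the paper's own argument. The paper proves this theorem by referring back to Theorem \ref{teo-exis-hip}: reflect across a radial geodesic $\alpha$ of the rotationally symmetric Hadamard quotient, foliate by geodesics orthogonal to $\alpha$, and pick the symmetric leaf joining $\bar q_1,\bar q_2$. Your boundary identification and flat-strip uniqueness are details the paper omits. However, one step you rely on is false, one step is unjustified, and the Hadamard input you take from the paper appears to be wrong.

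\paragraph{The pinching claim.} In \eqref{curv.hel} the numerator $g_b(\cosh\tau)$ has degree $6$, while $|X|^4\big(|X|^2+b^2\sinh^2\tau\big)^2$ is of order $\cosh^8\tau$. Hence $K\sim -2/\big((1+b^2)\cosh^2\tau\big)\to 0$, i.e.\ $K\sim-2/\rho^2$ in $g^*$-distance; already for $b=0$ one has $K=-2\sech^2\tau-\sech^4\tau\to0$. So there is no pinching. Two of your conclusions survive without it:
\begin{itemize}
\item the identification of the ideal boundary with the circle of directions, which holds in any Hadamard surface;
\item flat-strip uniqueness, which needs only $K<0$ everywhere.
\end{itemize}
The sweep step does not survive. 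The claim that the endpoints of the leaves orthogonal to $\alpha$ tend to $\alpha(\infty)$ is exactly what fails in the flat plane, and you give no other reason for it (the paper's one-line proof does not address it either). A fix is Clairaut's relation. Write $g^*=d\rho^2+h(\rho)^2d\phi^2$. The leaf orthogonal to $\alpha$ at distance $r$ has angular half-width $\int_r^\infty h(r)\,d\rho/\big(h(\rho)\sqrt{h(\rho)^2-h(r)^2}\big)=O(1/r)$, because $h$ grows like $\rho^2$. A minor point: $\overline\Sigma\cap\partial_\infty\h^3$ must contain $p$ and $q$, since every spiral orbit accumulates there.

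\paragraph{The Hadamard property (inherited from the paper).} Formula \eqref{proj} has the wrong sign. For a Riemannian submersion $|d\pi(Y)|=|Y-O_p(Y)|$, so $\hat g(Y,Y)=\overline g(Y,Y)-\overline g(O_pY,O_pY)\le\overline g(Y,Y)$; \eqref{proj} gives the reverse inequality. The correct reduced metric is
\[
g^*=(\cosh^2\tau+b^2\sinh^2\tau)\,d\tau^2+\sinh^2\tau\cosh^2\tau\,d\phi^2,
\qquad
K=-\frac{(1+b^2)\cosh 2\tau+2(1-b^2)}{(\cosh^2\tau+b^2\sinh^2\tau)^2}.
\]
This agrees with the hyperbolic case at $b=0$, but gives $K=b^2-3$ on the axis. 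So for $|b|>\sqrt3$ the quotient is not Hadamard, and neither Corollary \ref{coro} nor the flat-strip argument applies.

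\paragraph{Consequence for uniqueness.} For large $|b|$ uniqueness actually fails. Consider the geodesic whose closest point to the axis has $\tau=\tau_0$. Its Clairaut sweep is $\Theta(\tau_0)=2\int_{\tau_0}^\infty c\sqrt{\cosh^2\tau+b^2\sinh^2\tau}\,\big/\big(h\sqrt{h^2-c^2}\big)\,d\tau$, where $h=\sinh\tau\cosh\tau$ and $c=h(\tau_0)$.
\begin{itemize}
\item Since $\sqrt{\cosh^2\tau+b^2\sinh^2\tau}\ge|b|\sinh\tau$, $\Theta(\tau_0)$ is at least $|b|$ times a positive quantity independent of $b$. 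So for large $|b|$ it exceeds $2\pi$ at some $\tau_0$.
\item Since $\Theta\to0$ as $\tau_0\to\infty$, for each $\theta\in(0,\pi)$ there are embedded geodesics with sweeps $\theta$ and $2\pi-\theta$. They join the same pair of ideal points and lift to two distinct complete embedded helicoidal minimal surfaces with the same asymptotic boundary.
\end{itemize}
Existence still holds for every $b$, by the intermediate value theorem applied to $\Theta$, which equals $\pi$ at $\tau_0=0$. Once the metric is corrected, your argument, like the paper's, proves uniqueness only for $|b|\le\sqrt3$.
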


\begin{prova}
See the proof of Theorem \ref{teo-exis-hip}.
\end{prova}

Now Theorem \ref{main} for the case where $X$ is a helicoidal Killing field follows from Theorem \ref{teohel} and Theorem \ref{teo-exis-heli}.

Observe that Theorem \ref{exis} holds if $G$ is a helicoidal group of isometries. This is as a consequence of the last result following the same ideas of Corollary \ref{cor-11par}. Therefore, in order to conclude the proof of Theorem \ref{exis}, one needs only to consider the $G$ an elliptic group. In this case, Remark \ref{obss} shows the family of hyperbolic planes in $\h^3$ has the property to foliates $\overline{\h}^3$. Hence, one can note that these surfaces solve Theorem \ref{exis} for the case where $G$ is an elliptic Killing field.

We present some examples of curves $\Gamma$ that can be minimally filled according to Theorem \ref{exis}. We use the upper half-space model and the sphere $\mathbb{S}^2$ to represent $\partial_\infty\h^3$.

\begin{figure}[H]
    \centering
    \caption{$\Omega$ when $X$ is rotational}
    \vspace{0.5cm}\includegraphics[scale=0.40]{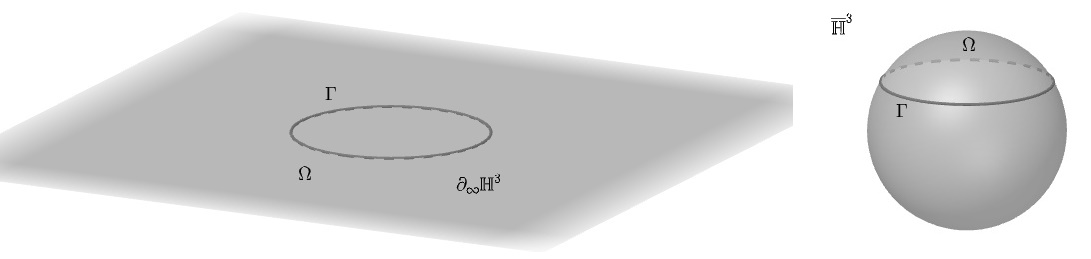}
\end{figure}

\begin{figure}[H]
    \centering
    \caption{$\Omega$ when $X$ is parabolic}
    \vspace{0.5cm}\includegraphics[scale=0.40]{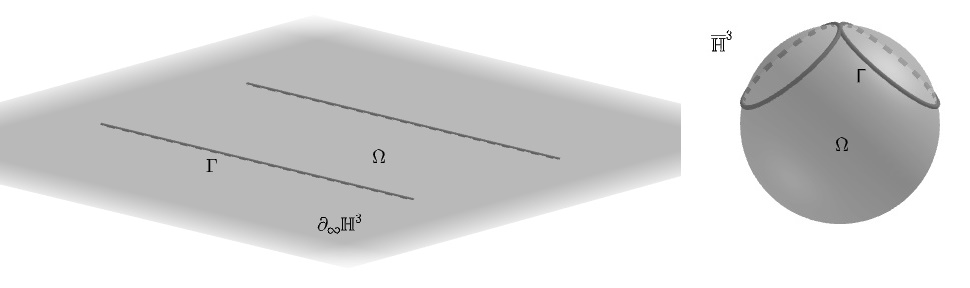}
\end{figure}

\begin{figure}[H]
    \centering
    \caption{$\Omega$ when $X$ is hyperbolic}
    \vspace{0.5cm}\includegraphics[scale=0.40]{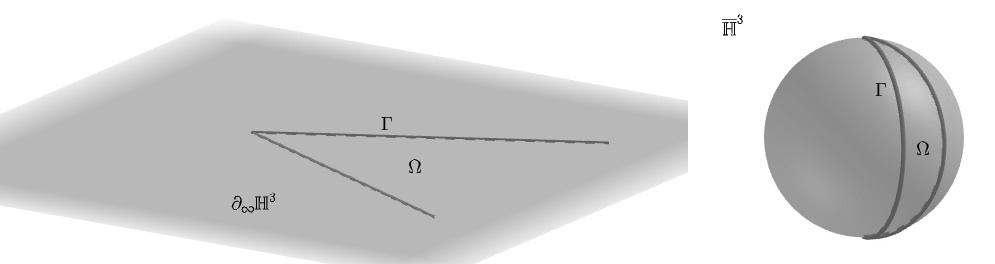}
\end{figure}

\begin{figure}[H]
    \centering
    \caption{$\Omega_s$ when $X$ is helicoidal}
    \vspace{0.5cm}\includegraphics[scale=0.40]{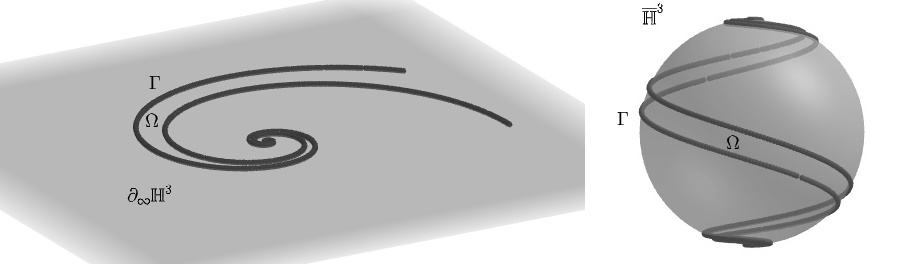}
\end{figure}



\end{document}